\begin{document}
\title[Almost automorphic delayed equations]
{Almost automorphic delayed differential  equations and Lasota-Wazewska model}

\author[A. Coronel, Ch. Maul\'en, M. Pinto, D. Sepulveda]
{An{\'\i}bal Coronel, Christopher Maul\'en, Manuel Pinto, Daniel Sepulveda}  

\address{An\'ibal Coronel \newline
GMA, Departamento de Ciencias B\'asicas, Facultad de Ciencias, Universidad del B\'{\i}o-B\'{\i}o, Campus Fernando May, Chill\'{a}n, Chile.}
\email{acoronel@ubiobio.cl}

\address{Christopher Maul\'en \newline
Departamento de Matem\'aticas, Facultad de Ciencias, Universidad de Chile}
\email{bleick@ug.uchile.cl}

\address{Manuel Pinto \newline
Departamento de Matem\'aticas, Facultad de Ciencias, Universidad de Chile}
\email{pintoj.uchile@gmail.com}

\address{Daniel Sepulveda \newline
Escuela de Matem\'aticas y Estad\'isticas, Universidad Central de Chile}
\email{daniel.sep.oe@gmail.com}

\date{\today}

\thanks{Partially supported by FONDECYT 1120709. 
CONICYT-PCHA/Mag\'ister Nacional/2013-221320155. An{\'\i}bal Coronel 
thanks for the  support of research projects 124109 3/R, 104709 01 F/E and
121909 GI/C at Universidad del B{\'\i}o-B{\'\i}o, Chile.}

\keywords{Abstract delay differential equations, 
Almost automorphic, 
Exponential dichotomy, 
Ergodicity, 
Evolution operator}

\begin{abstract}
Existence of almost automorphic solutions for abstract delayed
differential equations is established. Using ergodicity,
exponential dichotomy and Bi-almost automorphicity 
on the homogeneous part, sufficient
conditions for the existence and uniqueness of almost automorphic
solutions are given.
\end{abstract}
\maketitle

\numberwithin{equation}{section}
\newtheorem{thm}{Theorem}[section]
\newtheorem{lem}[thm]{Lemma}
\newtheorem{prop}[thm]{Proposition}
\newtheorem{cor}[thm]{Corollary}
\newtheorem{defn}{Definition}[section]
\newtheorem{conj}{Conjecture}[section]
\newtheorem{exmp}{Example}[section]
\newtheorem{rem}{Remark}[section]
\allowdisplaybreaks

\section{Introduction}

The development of the theory of almost periodic type functions has been
strongly stimulated
by problems arising in differential equations,  stability theory, 
dynamical systems and many other areas of science.
Nowadays,  there exist also a wide range of applications starting 
from the basic mathematical models based on linear ordinary 
differential equations, including nonlinear linear ordinary 
differential equations, differential equations in Banach space
and also partial differential equations. Moreover, there exist 
several related concepts which arise as generalizations of the
almost periodic concept. For instance the notions of almost automorphic,
asymptotically almost periodic, asymptotically almost automorphic
and pseudo almost periodic. Since there are plenty of 
results in literature, let us just quote, for their applications in
engineering and life science, for example 
asymptotically almost periodic functions 
\cite{Hernandez&Santos,Henriquez&Pierri&Taboas_1,
Henriquez&Pierri&Taboas_2,liang,Nicola&Pierre,
pinto_robledo_nonl,pinto_robledo_jmaa,pinto_robledo_applmat,pinto_robledo_glass,
Utz&Waltman,yoshizawa},
and pseudo almost periodic functions
\cite{Cuevas&Pinto,diagana,pinto_3}. 
Moreover, we recall that N'Gu\'er\'ekata 
has given a huge impulse to the study of almost automorphic
solutions of differential equations 
\cite{Blot-Mophou-N'Guerekata-Pennequin,Cieutat-Fatajou-N'Guerekata,
Fatajou-Minh-N'Guerekata-Pankov,Gal-N'Guerekata,Goldestein&Guerekata,
Liu-N'Guerekata-Minh,Minh-Naito-N'Guerekata,
N'Guerekata_1,N'Guerekata_2}. For some recent results of 
almost automorphic differential equations consult also
\cite{castillo,alan}.

In this paper,
we are initially motivated by a biological-mathematical 
model \cite{feng,grimmer,liz-martinez,Wei&Wang_1,Wei&Wang_2,zhao}
which is a delayed differential equation of the following type
\begin{eqnarray*}
y'(t)=-\delta(t) y(t)+p(t)g(y(t-\tau)), 
\end{eqnarray*}
with $\tau>0$, $\delta$ and $p$ almost automorphic functions
and $g$ a Lipschitz function. Then,  we focus our attention on 
the existence and uniqueness of solutions of the
following delayed differential equation
\begin{eqnarray}
y' &=& A(t)y+f(t)+g(t,y(t-\tau))
\qquad\mbox{with $\tau\geq 0$,}
\label{delay_lineal+f+g}
\end{eqnarray}
under several assumptions on $A,f$ and $g$. Naturally, the assumptions
on $A$ and $f$ are related to the almost automorphic behavior
and the assumptions on $g$ are mainly related with a  Lipschitz requirement.
We note that
\eqref{delay_lineal+f+g} naturally, includes as
particular cases the following equations
\begin{eqnarray}
y'&=&A(t)y,\label{lineal}\\
y'&=&A(t)y+f(t),\label{lineal+f}\\
y' &=& A(t)y+f(t)+g(t,y(t)).\label{lineal+f+g}
\end{eqnarray}
Thus, following a natural sequence of the classical systemic study of 
ordinary differential equations 
we start by analyzing the homogeneous linear equation \eqref{lineal}.
Then, we develop the theory for the non-homogeneous linear  equation \eqref{lineal+f}
by applying the method of variation of parameters .
In a third place, we analyze the nonlinear equation \eqref{lineal+f+g}
by using the fixed point arguments. Finally,
by a composition result of automorphic functions we get
an extension of the results for \eqref{lineal+f+g} to the 
delay equation \eqref{delay_lineal+f+g}. 

The main contributions 
and the organization of the paper are given as follows. In section~\ref{sec:preliminar}
we introduce the general assumptions, recall the concepts of almost
automorphicity and ergodic functions, define a convolution operator
and get the results for  \eqref{lineal}. To be a little more precise
in this section, we obtain some conditions for the exponential 
dichotomy  using ergodic functions and we prove the Bi-almost
periodicity and Bi-almost automorphicity of the Green function
when the evolution operator commute with the projection. We note that, the 
integral Bi-almost automorphicity property of the Green function
is fundamental to obtain the main results. In section~\ref{sec:mainres}, 
almost automorphicity of solutions 
of nonautonomous systems \eqref{lineal+f}, \eqref{lineal+f+g} 
and \eqref{delay_lineal+f+g} are obtained.
Here, the results of almost automorphicity of the differential equation
solutions are obtained by assuming that $A$ and $f$ are almost authomorphic and
$g$ satisfies \eqref{eq:condition_L}.
Finally, in section~\ref{sec:appl}
we study a biological model establishing an explicit condition 
under which there exist a unique almost automorphic solution 
of the Lasota-Wazewska equation.


\section{Preliminaries}
\label{sec:preliminar}

In this section we present some general assumptions, precise the concepts
related with the almost automorphic notion, we recall the notion
of ergodic functions, we define a convolution operator and the $\alpha$-exponential
dichotomy and introduce several results for the homogeneous equation  \eqref{lineal}
in the scalar, system and abstract case.

\subsection{General assumptions}
Here we present two general assumptions. Firstly,
throughout of the paper $(V,\Vert \cdot \Vert_{V})$ will be a Banach space
and let $(BC(\mathbb{R},V), \Vert \cdot \Vert_{\infty})$ will be used to denote
the Banach space of bounded continuous functions
from $\mathbb{R}$ into $V$ endowed with the sup 
norm $\Vert \varphi \Vert_{\infty}=\sup_{t\in \mathbb{R}} \Vert \varphi(t) \Vert_{V}$. 
Second, concerning to
the  assumptions on the coefficients $A,f$ and $g$ for  
equations \eqref{delay_lineal+f+g}-\eqref{lineal+f+g} we comment that
it will be specifically  done on the hypothesis of each result. However, 
in order to give a unified presentation,
we introduce some notation related to the assumption of
the local Lipschitz behavior of  $g$. 
Indeed, given a function $g$, it is assumed that:
\begin{eqnarray}
\left.
\begin{array}{cl}
\mbox{($g_0$)}&\mbox{$g(t,0)=0$ for all $t\in\mathbb{R}$;}\\
\mbox{($g_1$)}&\mbox{The function $g(t,y)$ is continuous on
$\mathbb{R}\times \Delta(\varphi_0,\rho)$ with $\Delta(\varphi_0,\rho)$
}
\\
&\mbox{the open ball centred in a given (fix) function $\varphi_0:\mathbb{R}\rightarrow V $
and with }
\\
&\mbox{radius $\rho\in\mathbb{R}^+$, i.e.,
$ \Delta(\varphi_0,\rho)=
 \Big\{ \varphi :\mathbb{R}\rightarrow V \Big|\;
\| \varphi - \varphi_0 \|_{\infty} \leq \rho\;
 \Big\}. 
 $ In par-}\\
 &\mbox{ticular, in subsections~\ref{subsec:lineal+f+g}-\ref{subsec:delay_lineal+f+g}
 will be assumed that  $\varphi_{0}$ is of the form}
\\
&\mbox{$\varphi_{0}(t)=\int_{\mathbb{R}} G(t,s)f(s)ds$
with $G$ the Green function defined on \eqref{green_c0}; }\\
\mbox{($g_2$)}&\mbox{There exist a positive constant $L$ such that
the inequality}\\
&
\mbox{$\| g(t,y_1)-g(t,y_2) \| \leq L \| y_1 -y_2 \|$ 
holds for all $(t,y_1,y_2)\in\mathbb{R}\times\Delta(\varphi_0,\rho)^2$.}
\end{array}
\right\}
\label{eq:condition_L}
\end{eqnarray}
This set of conditions ($g_0$)-($g_2$)
appear in several parts of the paper and essentially when we study the nonlinear
equations in subsections~\ref{subsec:lineal+f+g}-\ref{subsec:delay_lineal+f+g}.

\subsection{Almost automorphic notion and related concepts.}

We recall that the almost automorphic functions have been developed by 
Bochner \cite{Bochner_1,Bochner_2} as a generalization of almost
periodic functions. We recall that
a function $f\in BC(\mathbb{R},V)$ is called Bohr almost periodic 
\cite{corduneanu} if for each $\epsilon>0$, there
exist $l_{\epsilon}>0$ such that every interval of
length $l_{\epsilon}$ contains a number $\xi$ with the property: 
$\Vert f(t+\xi)-f(t) \Vert_{V} \leq \epsilon \  \mathrm{for } \  t\in \mathbb{R}.$
The set of Bohr almost periodic will be denoted by $BC(\mathbb{R},V)$. 
Then, we precise the concept of almost automorphic functions and matrices.

\begin{defn}
\label{def_automorphic}
Consider $V$ a Banach space. Then,
\begin{enumerate}[(i)]
 \item A continuous function $\psi:\mathbb{R}\rightarrow V$ is called an almost 
automorphic function if for any sequence of real numbers
$\{ \tilde{\tau}_n \}_{n=1}^{\infty}$,
there exist a subsequence 
$\{\tau_n \}_{n=1}^{\infty}$ of $ \{ \tilde{\tau}_n \}_{n=1}^{\infty}$ such
that the limit of the sequence $\{ \psi(t+\tilde{\tau}_n )\}_{n=1}^{\infty}$,
denoted by $\tilde{\psi}(t)$, is well defined for all $t\in\mathbb{R}$
and the sequence $\{ \tilde{\psi}(t-\tilde{\tau}_n )\}_{n=1}^{\infty}$
converges pointwise on $\mathbb{R}$ to $\psi(t)$, or equivalently
\begin{eqnarray}
\tilde{\psi}(t)=\lim_{n\rightarrow \infty } \psi(t+\tau_{n})
 \quad\mbox{and}\quad
 \psi(t)=\lim_{n\rightarrow \infty } \tilde{\psi}(t-\tau_{n})\label{tilde_psi}
\end{eqnarray}
are well defined for all $t\in \mathbb{R}$. 
The collection of all almost  automorphic functions from 
$\mathbb{R}$ to $V$ is denoted by $AA(\mathbb{R},V).$

\item
A matrix valued function $A:\mathbb{R}\to\mathbb{C}^{d_1\times d_2}$ 
is called an almost automorphic matrix valued function or equivalently
(most of the time by briefness) $A(t)\in \mathbb{C}^{d_1\times d_2}$
is called an almost automorphic matrix if for any 
sequence $\{\xi'_{n} \}_{n=1}^{\infty}\subset \mathbb{R}$, there
exist a subsequence $\{\xi_n\}_{n=1}^{\infty}$ of $ \{\xi'_n\}_{n=1}^{\infty}$ 
and a matrix $B(t)\in \mathbb{C}^{d_1\times d_2}$ 
such that the sequences $\{A(t+\xi_n)\}_{n=1}^{\infty}$
and $\{B(t-\xi_n)\}_{n=1}^{\infty}$ converges pointwise to $B(t)$
and $A(t)$, respectively.

\end{enumerate}

\end{defn}

We note that the convergence in \eqref{tilde_psi}
is pointwise. Then,   the function $\tilde{\psi}$ in \eqref{tilde_psi} 
is measurable, but not necessarily continuous. Moreover,
we note if we consider that convergence in definition \ref{def_automorphic} 
is uniform on $\mathbb{R}$ instead of pointwise convergence, 
we get that the function $\psi$ is  
Bochner almost periodic. It is well known that both definitions 
of almost periodicity (Bohr and Bochner)
are equivalents, see  for instance \cite{corduneanu}. 
Now, we note that  $AP(\mathbb{R},V)$ and 
$AA(\mathbb{R},V)$ are vectorial space 
and 
$AP(\mathbb{R},V) $ is a proper  subspace  of
$ AA(\mathbb{R},V)$, since for instance 
$\psi (t)= \cos \left( [2+\sin(t)+\sin(t\sqrt{2})]^{-1} \right) $ 
is an almost periodic function but not almost authomorphic.
Similarly, it is proven that the inclusion
$AP(\mathbb{R},V)\subset BC(\mathbb{R},V) $, for an extensive
discussion consult
\cite{caraballo-cheban,Bochner_2,Goldestein&Guerekata,
Gal-N'Guerekata,Liu-N'Guerekata-Minh,Minh-Naito-N'Guerekata,
Minh-Dat,N'Guerekata_1,N'Guerekata_2,Veech_1,Veech_2, 
Zaidman_1,Zaki,Ding&Xiao&Liang,Blot-Mophou-N'Guerekata-Pennequin,
Xiao-Zhu-Liang,Zaidman_2,Fatajou-Minh-N'Guerekata-Pankov}.

To close this subsection we introduce two additional facts.
Firstly, we note that
the simpler equation \eqref{lineal+f} with $A \equiv 0$, i.e., 
$y'(t)=f(t),$
with $f\in AA(\mathbb{R},V)$ has not necessarily a 
solution $y\in AA(\mathbb{R},V)$. However, this fact
is true in a uniformly convex Banach space 
$V$ and hence in every Hilbert space, see Theorem~\ref{1.1}. 
In the second place 
we need a composition result \cite{Cieutat-Fatajou-N'Guerekata},
which will be fundamental for the analysis of  \eqref{lineal+f+g}
and \eqref{delay_lineal+f+g}, see Proposition~\ref{composition}.

\begin{thm}\label{1.1}
Denote by $C_0$ the vectorial space formed 
by the functions which vanishes at infinity.
Consider that $V$ is a Banach space which does not contain $C_{0}$ 
as an isomorphic subspace and let $f\in AA(\mathbb{R},V)$. 
Then, the function $F(t)=\int_0^{t}f(s)ds$ is in $AA(\mathbb{R},V)$ 
if and only if it is bounded.\\
Such Banach space with this property for $F$ will be called a
Banach space with the Bohl-Bohr property.
\end{thm}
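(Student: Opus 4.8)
The plan is to prove the two implications separately, the forward direction being immediate and the converse carrying essentially all the content. For the \emph{only if} direction I would simply use that every almost automorphic function is bounded: if $F$ were unbounded, I could pick $t_n$ with $\|F(t_n)\|_V\to\infty$ and apply Definition~\ref{def_automorphic} to the sequence $\{t_n\}$, obtaining a subsequence $\{\tau_n\}$ along which $\lim_n F(0+\tau_n)=\lim_n F(\tau_n)$ exists in $V$; this contradicts $\|F(\tau_n)\|_V\to\infty$. Hence $F\in AA(\mathbb{R},V)$ forces $F$ bounded, which is the trivial half of the equivalence.

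For the \emph{if} direction, assume $F$ is bounded. First note that $F$ is Lipschitz, hence uniformly continuous, because $f$ is almost automorphic and therefore bounded, so $\|F(t)-F(s)\|_V\le\|f\|_\infty|t-s|$. Now fix an arbitrary real sequence $\{\tilde\tau_n\}$. Using the almost automorphy of $f$, I would extract a subsequence $\{\tau_n\}$ along which $\tilde f(t)=\lim_n f(t+\tau_n)$ exists for every $t$ and $\lim_n \tilde f(t-\tau_n)=f(t)$. The natural candidate for the automorphic limit of $F$ is $\tilde F(t)=c+\int_0^t\tilde f(s)\,ds$, and the governing identity is the translation formula
\[
F(t+\tau_n)=F(\tau_n)+\int_0^t f(s+\tau_n)\,ds,
\]
whose integral term converges to $\int_0^t\tilde f(s)\,ds$ by dominated convergence for Bochner integrals (the integrand is bounded by $\|f\|_\infty$ on the fixed interval, and $\tilde f$ is strongly measurable as a pointwise limit with relatively compact range). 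Consequently the forward limit $\lim_n F(t+\tau_n)$ exists for every $t$ as soon as the single bounded sequence $\{F(\tau_n)\}$ converges to some $c\in V$.

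The main obstacle, and the only place where the hypothesis on $V$ enters, is precisely the convergence of $\{F(\tau_n)\}$: a bounded sequence in an arbitrary Banach space need not cluster in norm. Here I would invoke the geometry of $V$. Because $V$ does not contain $C_0$ as an isomorphic subspace, the Kadets-type argument generalizing the Bohl--Bohr theorem applies: via the Bessaga--Pe\l czy\'nski dichotomy, weakly unconditionally Cauchy series in $V$ must converge, and I would use this to show that the increments $F(\tau_{n+1})-F(\tau_n)=\int_{\tau_n}^{\tau_{n+1}}f$ of the bounded primitive assemble into a convergent series. This forces, after a further subsequence (along which the $f$-limits persist), both $F(\tau_n)\to c$ and the relative compactness of the range $\overline{F(\mathbb{R})}$. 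I expect this functional-analytic step to be the genuinely hard part of the argument.

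Finally, with $\tilde F(t)=c+\int_0^t\tilde f$ in hand, I would verify the reverse convergence $\lim_n\tilde F(t-\tau_n)=F(t)$. Writing
\[
\tilde F(t-\tau_n)=\tilde F(-\tau_n)+\int_0^t\tilde f(s-\tau_n)\,ds,
\]
dominated convergence gives $\int_0^t\tilde f(s-\tau_n)\,ds\to\int_0^t f(s)\,ds=F(t)$, so the claim reduces to the consistency relation $\tilde F(-\tau_n)\to 0=F(0)$. I anticipate this last point to require a diagonal argument combined with the uniform continuity of $F$ and the relative compactness just established, matching the forward and reverse limits. Once this is checked, $\tilde F\in AA(\mathbb{R},V)$ by construction and $F$ is almost automorphic, completing the proof; the terminology \emph{Bohl--Bohr property} then records exactly the class of $V$ for which this conclusion holds.
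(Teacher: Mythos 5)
The paper never proves Theorem~\ref{1.1}: it is quoted as a known background result (the Bohl--Bohr--Kadets theorem and its almost automorphic extension), so there is no in-paper argument to compare yours against and your attempt must stand on its own. Its skeleton is sound: the ``only if'' half via boundedness of almost automorphic functions is correct, the translation identity $F(t+\tau_n)=F(\tau_n)+\int_0^t f(s+\tau_n)\,ds$ plus dominated convergence on a fixed interval is the right reduction, and you correctly locate all of the difficulty in the convergence (or at least subsequential convergence) of the bounded sequence $\{F(\tau_n)\}$.

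Precisely at that point, however, your argument has a genuine gap. The Bessaga--Pe\l czy\'nski criterion you invoke says that in a space containing no copy of $c_0$ every \emph{weakly unconditionally Cauchy} series converges, where wuC means $\sum_n\vert x^*(x_n)\vert<\infty$ for every $x^*\in V^*$, equivalently that the partial sums with \emph{arbitrary signs} are uniformly bounded. The increments $x_n=F(\tau_{n+1})-F(\tau_n)=\int_{\tau_n}^{\tau_{n+1}}f(s)\,ds$ only have bounded \emph{unsigned} partial sums, since $\sum_{k\le n}x_k=F(\tau_{n+1})-F(\tau_1)$ and $F$ is bounded; that is far weaker than wuC, and the hypothesis on $V$ cannot repair it: already in $V=\mathbb{R}$ (which certainly contains no $c_0$) the series $\sum_n(-1)^n$ has bounded partial sums and diverges. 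So the inference ``bounded primitive $\Rightarrow$ the increment series converges'' fails as stated, and the almost automorphy of $f$ must enter this step in an essential way --- that is exactly the content of Kadets' lemma (relative compactness of the range of a bounded primitive under the no-$c_0$ hypothesis), whose proof is substantially harder than your one-line reduction. The same missing compactness is then used twice more without justification: to get both $F(\tau_n)\to c$ and the relative compactness of $\overline{F(\mathbb{R})}$, and to settle the reverse limit $\tilde F(-\tau_n)\to 0$, which you only ``anticipate'' via a diagonal argument. In short: correct architecture and a correct easy direction, but the functional-analytic core --- which is the entire content of the theorem --- is not actually proved.
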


\begin{prop}\label{composition}
Let $g=g(t,y)\in AA(\mathbb{R}\times V,V)$  uniformly in $t$ for
$y$ in a compact set contained in $V$ and $g$ satisfies the assumptions
given on \eqref{eq:condition_L}. Then, $g(t,\varphi(t))\in AA(\mathbb{R},V)$ for all 
$\varphi\in AA(\mathbb{R},\Delta (\varphi_0,\rho))$.
\end{prop}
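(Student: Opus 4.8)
The plan is to prove almost automorphy of $h(t):=g(t,\varphi(t))$ directly from Definition~\ref{def_automorphic}(i) by a double extraction of subsequences (Bochner's diagonal argument) combined with the Lipschitz estimate ($g_2$) from \eqref{eq:condition_L}. Fix an arbitrary sequence $\{\tilde\tau_n\}\subset\mathbb{R}$. First I would invoke the almost automorphy of $\varphi\in AA(\mathbb{R},\Delta(\varphi_0,\rho))$ to extract a subsequence $\{\tau'_n\}$ along which $\tilde\varphi(t)=\lim_n\varphi(t+\tau'_n)$ exists for every $t$ and $\varphi(t)=\lim_n\tilde\varphi(t-\tau'_n)$. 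Since each $\varphi(t+\tau'_n)$ stays in the closed tube determined by $\Delta(\varphi_0,\rho)$, the pointwise limit $\tilde\varphi(t)$ lies there as well, so ($g_2$) remains applicable to the pair $\varphi(t+\tau'_n),\tilde\varphi(t)$. Next, using that $g\in AA(\mathbb{R}\times V,V)$ uniformly for $y$ in compact sets, I would extract a further subsequence $\{\tau_n\}\subset\{\tau'_n\}$ together with a limit function $\tilde g$ satisfying $\tilde g(t,y)=\lim_n g(t+\tau_n,y)$ and $g(t,y)=\lim_n\tilde g(t-\tau_n,y)$, the convergence being uniform on compact subsets of $V$; passing ($g_2$) to the limit shows $\tilde g(t,\cdot)$ is again $L$-Lipschitz.

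Then I would set $\tilde h(t):=\tilde g(t,\tilde\varphi(t))$ and verify the two limits in \eqref{tilde_psi}. For the forward limit, split
\begin{eqnarray*}
\|h(t+\tau_n)-\tilde h(t)\|
&\le&\|g(t+\tau_n,\varphi(t+\tau_n))-g(t+\tau_n,\tilde\varphi(t))\|\\
&&{}+\|g(t+\tau_n,\tilde\varphi(t))-\tilde g(t,\tilde\varphi(t))\|.
\end{eqnarray*}
The first summand is bounded by $L\|\varphi(t+\tau_n)-\tilde\varphi(t)\|$ via ($g_2$) and tends to $0$; the second tends to $0$ because $\tilde\varphi(t)$ is a fixed point and $g(t+\tau_n,\cdot)\to\tilde g(t,\cdot)$ holds there. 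The backward limit $\tilde h(t-\tau_n)\to h(t)$ follows symmetrically, splitting through $\tilde g(t-\tau_n,\varphi(t))$ and using the $L$-Lipschitz property of $\tilde g$ together with $\tilde\varphi(t-\tau_n)\to\varphi(t)$ and $\tilde g(t-\tau_n,\cdot)\to g(t,\cdot)$.

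The main point requiring care is the interplay of the two domains: one must make sure the arguments fed into $g$ and $\tilde g$ actually lie in the region $\Delta(\varphi_0,\rho)$ on which ($g_2$) is valid, which is why the closedness of the tube and the inclusion $\varphi\in AA(\mathbb{R},\Delta(\varphi_0,\rho))$ are needed to keep $\tilde\varphi(t)$ and all the relevant translates admissible. I expect the only genuinely delicate step to be the double extraction itself, namely guaranteeing that a single subsequence $\{\tau_n\}$ simultaneously controls $\varphi$ and $g$; this is resolved by drawing the second subsequence out of the first. The Lipschitz hypothesis then conveniently removes any need for the convergence $\varphi(t+\tau_n)\to\tilde\varphi(t)$ to be uniform, since it only ever enters through the factor $L\|\varphi(t+\tau_n)-\tilde\varphi(t)\|$ evaluated at a fixed $t$.
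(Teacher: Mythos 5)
Your argument is correct, but note that it cannot be matched against a proof in the paper, because the paper gives none: Proposition~\ref{composition} is imported as a known composition result from \cite{Cieutat-Fatajou-N'Guerekata} and used as a black box. What you wrote is the standard self-contained argument behind such results: a Bochner double extraction (first a subsequence taming $\varphi$, then a further subsequence, drawn from the first, taming $g$), followed by the triangle-inequality splitting in which the Lipschitz condition ($g_2$) of \eqref{eq:condition_L} absorbs the error $\|\varphi(t+\tau_n)-\tilde\varphi(t)\|$ while the convergence of $g(t+\tau_n,\cdot)$ at the single point $\tilde\varphi(t)$ handles the rest, and symmetrically for the backward limits. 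Two features of your write-up are worth highlighting. First, because of ($g_2$) you never actually use the ``uniformly for $y$ in a compact set'' part of the hypothesis: pointwise convergence in $y$ at the two points $\tilde\varphi(t)$ and $\varphi(t)$ suffices; in non-Lipschitz versions of the theorem that uniformity, combined with the relative compactness of the range of an almost automorphic function, is precisely what substitutes for your Lipschitz step, so your proof is in this respect more elementary than (though implied by) the cited one. Second, you correctly flag the domain issue created by the paper's notation: ($g_2$) is stated on $\Delta(\varphi_0,\rho)^2$, which is a ball of \emph{functions}, so it must be read as a condition on the corresponding set of values in $V$; your observations that one may work with the closure of that set, that the pointwise limits $\tilde\varphi(t)$ and $\tilde\varphi(t-\tau_n)$ remain in it, and that the limit function $\tilde g(t,\cdot)$ inherits the $L$-Lipschitz bound there by passing to the limit in ($g_2$), are exactly the checks needed to make both splittings legitimate. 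With those checks in place the proof is complete and fills a gap the paper delegates to the literature.
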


\subsection{Ergodic functions}
Here we introduce the concept of ergodic functions
and deduce that  these types  of functions implies naturally
an exponential behavior (or $\alpha$-exponential dichotomy to be more
precise).

\begin{defn}\label{mean}
A function $f\in BC(\mathbb{R},V)$ is called an ergodic function if the limit 
\begin{eqnarray*}
M(f) = \lim_{T\rightarrow \infty} \frac{1}{2T} \int_{-T+\xi}^{T+\xi}f(s)ds
\end{eqnarray*}
exists uniformly with respect to $\xi\in \mathbb{R}$ and its value is independent of $\xi$. 
The complex number $M(f)$ is called the mean of the function $f$.
\end{defn}

The mean of an  ergodic function has several properties, 
a complete list of properties
may be consulted in \cite{Zhang_ergodic}. 
Among these useful basic properties, we only recall 
the translation invariance property, since it
will be used frequently in the proofs given
below in this paper. Indeed,  the translation invariance property of 
$M(f)$, set  that $M(f)$ satisfies the following identity
\begin{eqnarray}
M(f)=M(f_{\xi}),\ \label{translation_invarance}
\end{eqnarray}
where $f_{\xi}$ denotes a $\xi$-translation of $f$, i.e. 
$f_{\xi}(t)=f(t+\xi)$ for all $t\in \mathbb{R}$ and any 
arbitrarily given $\xi \in \mathbb{R}$ (but fixed).

\begin{lem}\label{lema_mean}
Consider $\mu\in BC(\mathbb{R},\mathbb{C})$ 
an ergodic function with  $\mathrm{Re}(M(\mu))\neq 0$
and also consider $\alpha\in\mathbb{R}^+$. 
Then, there exist two positive constants 
$T_0$ (big enough) and $c$ such that
the two assertions given below are valid:
\begin{enumerate}[(i)]
\item \label{int_exponencial_negativo} If
$\mathrm{Re}(M(\mu))\in ]-\infty,-\alpha[\subset\mathbb{R}^-$, then
the following inequalities hold true:
\begin{eqnarray}
&&\mathrm{Re}\left( \int_{s}^{t}\mu(r)dr\right)
<-\alpha(t-s)
\quad 
\mbox{for}
\quad
t-s>T_0,\label{int_negativa}
\\
&&\left| \exp\left({\int_{s}^{t}\mu(r)dr}\right)\right | 
\leq
c\exp({-\alpha(t-s)})
\quad
\mbox{for all $(t,s)\in\mathbb{R}^2$ such that 
$t\geq s\geq 0$}.
\label{exp_negativa}
\end{eqnarray}

\item \label{int_exponencial_positivo}
If
$\mathrm{Re}(M(\mu))\in ]\alpha,\infty[\subset\mathbb{R}^+$, then
the following inequalities hold true:
\begin{eqnarray}
&&\mathrm{Re}\left( \int_{s}^{t}\mu(r)dr\right)
<\alpha(t-s)
\quad 
\mbox{for}
\quad
s-t>T_0,\label{int_positiva}
\\
&&\left| \exp\left({\int_{s}^{t}\mu(r)dr}\right)\right | 
\leq
c\exp({\alpha(t-s)})
\quad
\mbox{for all $(t,s)\in\mathbb{R}^2$ such that 
$s\geq t\geq 0$.}
\label{exp_postiva}
\end{eqnarray}
\end{enumerate}
\end{lem}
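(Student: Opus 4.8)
The plan is to reduce everything to the defining limit in Definition~\ref{mean} by recognizing the two-sided integral $\int_s^t \mu(r)\,dr$ as a \emph{symmetric} average read off from its midpoint and half-length. Concretely, given $s<t$, I would set $\xi=\tfrac{s+t}{2}$ and $T=\tfrac{t-s}{2}$, so that $[-T+\xi,\,T+\xi]=[s,t]$ and hence
\[
\frac{1}{t-s}\int_{s}^{t}\mu(r)\,dr=\frac{1}{2T}\int_{-T+\xi}^{T+\xi}\mu(r)\,dr .
\]
Since by hypothesis the limit defining $M(\mu)$ is attained uniformly in $\xi$, for every $\varepsilon>0$ there is a threshold $\widehat{T}_0$ such that $T>\widehat{T}_0$ forces the right-hand side to lie within $\varepsilon$ of $M(\mu)$, independently of the center $\xi$. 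Taking real parts yields $\big|\tfrac{1}{t-s}\mathrm{Re}(\int_s^t\mu)-\mathrm{Re}(M(\mu))\big|<\varepsilon$ whenever $t-s>2\widehat{T}_0$.

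For assertion (i), I would write $\mathrm{Re}(M(\mu))=-\beta$ with $\beta>\alpha$ and choose $\varepsilon=\beta-\alpha>0$. The estimate above then gives $\tfrac{1}{t-s}\mathrm{Re}(\int_s^t\mu)<-\beta+(\beta-\alpha)=-\alpha$, which upon multiplying by $t-s>0$ is exactly \eqref{int_negativa}, with $T_0:=2\widehat{T}_0$. To obtain the global bound \eqref{exp_negativa} I would use $|\exp(\int_s^t\mu)|=\exp(\mathrm{Re}(\int_s^t\mu))$ and split into two regimes. When $t-s>T_0$, inequality \eqref{int_negativa} gives the bound with constant $1$. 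When $0\le t-s\le T_0$, I would invoke boundedness of $\mu$: with $K=\Vert\mu\Vert_{\infty}$ one has $\mathrm{Re}(\int_s^t\mu)\le K(t-s)\le KT_0$, while $\exp(-\alpha(t-s))\ge\exp(-\alpha T_0)$, so that $|\exp(\int_s^t\mu)|\le\exp((K+\alpha)T_0)\exp(-\alpha(t-s))$. Choosing $c=\exp((K+\alpha)T_0)$ (which dominates $1$) covers both regimes.

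Assertion (ii) follows by the same scheme applied to the reversed interval. For $s>t$ I would again center at $\xi=\tfrac{s+t}{2}$ with $T=\tfrac{s-t}{2}$, so that $\tfrac{1}{s-t}\int_t^s\mu=\tfrac{1}{2T}\int_{-T+\xi}^{T+\xi}\mu\to M(\mu)$ uniformly. Writing $\mathrm{Re}(M(\mu))=\beta>\alpha$ and $\varepsilon=\beta-\alpha$ gives $\mathrm{Re}(\int_t^s\mu)>\alpha(s-t)$ for $s-t$ large, and negating yields $\mathrm{Re}(\int_s^t\mu)<\alpha(t-s)$, which is \eqref{int_positiva}. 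The exponential bound \eqref{exp_postiva} is then obtained by the identical two-regime argument, using the crude bound $\mathrm{Re}(\int_s^t\mu)\le K(s-t)$ on $0\le s-t\le T_0$.

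The only delicate point is the first step: the mean is phrased through symmetric averages, so the essential observation is that $\int_s^t$ \emph{is} precisely such an average once its midpoint and half-length are identified, and that the uniformity in $\xi$ in Definition~\ref{mean} is exactly what transfers the limit to the whole family of intervals $[s,t]$ of growing length. Everything after that is elementary: an $\varepsilon$-argument converting a strict inequality on $\mathrm{Re}(M(\mu))$ into a bound linear in $t-s$, and the boundedness of $\mu$ to absorb the short intervals into the constant $c$.
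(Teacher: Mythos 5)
Your proof is correct and follows the same overall strategy as the paper's: reduce the linear estimates \eqref{int_negativa} and \eqref{int_positiva} to the defining limit of the ergodic mean, then exponentiate. The difference is in how the reduction is carried out. The paper invokes the translation invariance property \eqref{translation_invarance} to assert $\int_{0}^{T}\mu(t+\tau)\,d\tau=[M(\mu)+o(1)]T$ uniformly, and then substitutes $T=t-s$ and translates; you instead observe that $\int_{s}^{t}\mu$ \emph{is already} a symmetric average with center $\xi=\tfrac{s+t}{2}$ and half-length $T=\tfrac{t-s}{2}$, so the uniformity in $\xi$ built into Definition~\ref{mean} applies verbatim. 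These are equivalent observations (a one-sided average is itself a recentered symmetric one), but your route uses nothing beyond the definition. Where you genuinely add something is in the exponential bounds \eqref{exp_negativa} and \eqref{exp_postiva}: the paper dispatches them in one sentence as ``a consequence of the exponential function increasing behavior'' and never constructs the constant $c$, whereas you carry out the required two-regime split, using \eqref{int_negativa} when $t-s>T_0$ and the crude bound $\mathrm{Re}\bigl(\int_{s}^{t}\mu\bigr)\leq \Vert\mu\Vert_{\infty}(t-s)$ on short intervals, arriving at the explicit constant $c=\exp\bigl((\Vert\mu\Vert_{\infty}+\alpha)T_0\bigr)$. That step is precisely what the ``for all $(t,s)$'' clause of \eqref{exp_negativa} demands, so your writeup closes a gap that the paper leaves implicit.
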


\begin{proof}
Let us assume that $\mu\in BC(\mathbb{R},\mathbb{C})$ is 
an ergodic function.  Then, by the Definition~\ref{mean}
and the translation invariance property of $M(f)$ 
(see \eqref{translation_invarance}) we have that
\begin{eqnarray}
 \int_{0}^{T}\mu(t+\tau)d\tau=[M(\mu)+o(1)]T
 \quad\mbox{when}\quad T\rightarrow \infty.
\end{eqnarray}
Here and throughout of the paper $o(1)$ corresponds to the
well known Bachmann-Landau notation, i.e.
$f=o(g)$ if and only if $(f/g)(x)\to 0$ when $x\to \infty.$
Then, when $T=t-s$, we get
\begin{eqnarray}
\int_{s}^{t}\mu(r)dr=\int_{0}^{T} \mu(-s+\tau)d\tau
\end{eqnarray}
and the proof of \eqref{int_negativa}  follows immediately.
The proof of \eqref{int_exponencial_negativo}
is a consequence of the exponential function increasing behavior.
Thus, the proof of item {\it (i)} is completed. Now,
the proof of item {\it (ii)} is similar
and we omit it.
\end{proof}

\subsection{The convolution operator}
Let us denote by $L^1(\mathbb{R})$ and
$L^\infty(\mathbb{R})$ the spaces 
of Lebesgue integrable functions on $\mathbb{R}$
and essentially bounded functions on $\mathbb{R}$, respectively. Then, 
the convolution operator on  
$\mathcal{L}:L^\infty(\mathbb{R})\to L^\infty(\mathbb{R})$ is defined as 
the operator such that
\begin{eqnarray}
\mathcal{L}(\varphi)(t)=\int_{\mathbb{R}} h(t-s)\varphi(s)ds,
\quad 
h\in L^{1}(\mathbb{R}),
\quad
t\in \mathbb{R},
\label{convolution_operator}
\end{eqnarray}
for all $\varphi \in L^\infty(\mathbb{R})$.
Some properties of $\mathcal{L}$, which will be needed in the proof
the main results, are summarized in the following lemma.

\begin{lem}\label{2.2}
Consider $\mathcal{L}$ 
the convolution operator  defined by \eqref{convolution_operator}.
Then, the spaces $BC(\mathbb{R}),$ $AP(\mathbb{R})$
and $AA(\mathbb{R})$ are invariants under the operator $\mathcal{L}$.
Moreover, the inequalities
\begin{eqnarray}
\|\mathcal{L}\varphi\|_{L^\infty(\mathbb{R})} 
&\le & \|\varphi\|_{L^\infty(\mathbb{R})} \|h\|_{L^1(\mathbb{R})}
\quad \mbox{for  $\varphi\in BC(\mathbb{R})$,}
 \label{cerrado_BC}
 \\
 \|(\mathcal{L}\varphi)_{\xi}-\mathcal{L}\varphi\|_{L^\infty(\mathbb{R})} 
&\le & \|(\varphi)_{\xi}-\varphi\|_{L^\infty(\mathbb{R})} \|h\|_{L^1(\mathbb{R})}
\quad \mbox{for $\xi\in\mathbb{R}$ and $\varphi\in AP(\mathbb{R})$,}
\label{cerrado_AA}
\end{eqnarray}
hold. Here $(\mathcal{L}\varphi)_{\xi}$ and $(\varphi)_{\xi}$
are the $\xi$-translation functions for 
$\mathcal{L}\varphi$ and $\varphi$, 
respectively.
\end{lem}
\begin{proof}
Let us select $\varphi\in AA(\mathbb{R})$. Then, by Definition~\ref{def_automorphic},
given an arbitrary sequence of real numbers
$\{ \tilde{\tau}_n \}_{n=1}^{\infty}$,
there exist a subsequence 
$\{\tau_n \}_{n=1}^{\infty}$ of $ \{ \tilde{\tau}_n \}_{n=1}^{\infty}$ such
that \eqref{tilde_psi} is satisfied.
Now, if we consider $\psi=L(\varphi)$, we have that 
\eqref{tilde_psi} is equivalently rewritten as follows
\begin{eqnarray}
\tilde{\psi}(t) = \lim_{n\rightarrow \infty} \psi(t+\tau_n)
\quad
\mbox{and}
\quad
\psi(t) =\lim_{n\rightarrow \infty} \tilde{\psi}(t-\tau_n).
\end{eqnarray}
Indeed, this fact can be proved by application of 
Lebesgue's dominated convergence theorem, since
\begin{eqnarray*}
\tilde{\psi}(t) 
&=& \lim_{n\rightarrow \infty} \mathcal{L}(\varphi_{\tau_n})(t)
	= \lim_{n\rightarrow \infty} \int_{\mathbb{R}}h(r)\varphi_{\tau_{n}}(t-r)dr
\\
&=& \int_{\mathbb{R}}h(r)\tilde{\varphi}(t-r)dr
	= \mathcal{L}(\tilde{\varphi})(t).
\end{eqnarray*}

Let us consider $\varphi\in BC(\mathbb{R})$, then 
we deduce \eqref{cerrado_BC} by application of the H\"older inequality.
Now, from \eqref{cerrado_BC} we follow the invariance
of $BC(\mathbb{R})$.
\end{proof}

\vspace{0.5cm}
We note
 that, if we define
 \begin{eqnarray*}
  h_1(x)=
  \left\{
  \begin{array}{lll}
   \exp(-\alpha x),&\quad&x>0,\\
   0,&&\mbox{otherwise},
  \end{array}
  \right.
  \quad
  \mbox{and}
  \quad
   h_2(x)=
  \left\{
  \begin{array}{lll}
   \exp(\alpha x),&\quad&x<0,\\
   0,&&\mbox{otherwise},
  \end{array}
  \right. 
 \end{eqnarray*}
 for some $\alpha\in\mathbb{R}^+$
 and denote by $\mathcal{L}_{1}$
and $\mathcal{L}_{2}$ the corresponding convolution operators associated
with $h_1$ and $h_2$, respectively.
Then, we get an interesting result by application of Lemma~\ref{2.2}.
More precisely, we have the following Corollary.

\begin{cor}
Consider $\alpha\in\mathbb{R}^+$ and 
the operators $\mathcal{L}_{i},\ i=1,2$ defined by
\begin{eqnarray}
\mathcal{L}_{1}(\varphi)(t)
	=\int_{-\infty}^{t}e^{-\alpha(t-s)}\varphi(s)ds
\quad\mbox{and}\quad
\mathcal{L}_{2}(\varphi)(t)
	=\int_{t}^{\infty}e^{\alpha(t-s)}\varphi(s)ds,
\label{eq:operators_exp_green}
\end{eqnarray}
respectively.
Then, the spaces $BC(\mathbb{R}),$ $AP(\mathbb{R})$
and $AA(\mathbb{R})$ are invariants under the operator $\mathcal{L}_{i},\ i=1,2$.
Moreover, the inequalities \eqref{cerrado_BC} and  \eqref{cerrado_AA} are satisfied
with  $\mathcal{L}_{i},i=1,2,$ instead of~$\mathcal{L}$.
\end{cor}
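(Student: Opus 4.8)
The plan is to deduce this Corollary as a direct application of Lemma~\ref{2.2}, so the essential task is to verify that the kernels $h_1$ and $h_2$ are genuinely in $L^1(\mathbb{R})$ and that the operators $\mathcal{L}_i$ defined in \eqref{eq:operators_exp_green} coincide with the convolution operators $\mathcal{L}$ associated to these kernels via \eqref{convolution_operator}. First I would observe that
\begin{eqnarray*}
\|h_1\|_{L^1(\mathbb{R})}=\int_0^{\infty}e^{-\alpha x}\,dx=\frac{1}{\alpha}
\quad\mbox{and}\quad
\|h_2\|_{L^1(\mathbb{R})}=\int_{-\infty}^0 e^{\alpha x}\,dx=\frac{1}{\alpha},
\end{eqnarray*}
so that both $h_1,h_2\in L^1(\mathbb{R})$ with finite norm $1/\alpha$, which is exactly the integrability hypothesis required in the definition \eqref{convolution_operator} of $\mathcal{L}$.

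Next I would carry out the kernel identification by a change of variables. Substituting $s=t-x$ in the convolution $\int_{\mathbb{R}}h_1(t-s)\varphi(s)\,ds$ and using that $h_1$ is supported on $(0,\infty)$ forces $t-s>0$, i.e. $s<t$, which gives
\begin{eqnarray*}
\int_{\mathbb{R}}h_1(t-s)\varphi(s)\,ds
=\int_{-\infty}^{t}e^{-\alpha(t-s)}\varphi(s)\,ds
=\mathcal{L}_1(\varphi)(t).
\end{eqnarray*}
The analogous computation with $h_2$ supported on $(-\infty,0)$ forces $t-s<0$, i.e. $s>t$, yielding $\mathcal{L}_2(\varphi)(t)=\int_t^{\infty}e^{\alpha(t-s)}\varphi(s)\,ds$. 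Thus each $\mathcal{L}_i$ is precisely a convolution operator of the form \eqref{convolution_operator} with kernel $h_i\in L^1(\mathbb{R})$.

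Having matched $\mathcal{L}_i$ with the framework of Lemma~\ref{2.2}, the conclusions follow at once: invariance of $BC(\mathbb{R})$, $AP(\mathbb{R})$ and $AA(\mathbb{R})$ under $\mathcal{L}_i$, together with the estimates \eqref{cerrado_BC} and \eqref{cerrado_AA} (now reading $\|\mathcal{L}_i\varphi\|_{L^\infty}\le\|\varphi\|_{L^\infty}/\alpha$), are simply the assertions of Lemma~\ref{2.2} specialized to $h=h_i$. I do not anticipate a serious obstacle here, as the Corollary is essentially a worked example; the only point requiring mild care is being explicit that the one-sided integrals in \eqref{eq:operators_exp_green} arise exactly from the one-sided supports of $h_1$ and $h_2$, so that the truncation of the integration domain in $\mathcal{L}_i$ is not an additional structure but a consequence of the vanishing of the kernel on a half-line.
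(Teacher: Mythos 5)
Your proposal is correct and follows exactly the paper's route: the paper presents this Corollary as an immediate application of Lemma~\ref{2.2} to the convolution operators generated by the kernels $h_1$ and $h_2$, which is precisely what you do. Your additions --- computing $\|h_i\|_{L^1(\mathbb{R})}=1/\alpha$ and checking that the one-sided integration limits in \eqref{eq:operators_exp_green} come from the half-line supports of the kernels --- simply make explicit the identification that the paper leaves implicit.
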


\subsection{Some concepts and properties related to equation \eqref{lineal}}
In this subsection we study the equation \eqref{lineal}. In order to introduce the 
concepts and results, we recall the standard notation of 
fundamental matrix and flow associated
with \eqref{lineal}, which are denoted by $\Phi_A$
and $\Psi_A$, respectively. More precisely 
\begin{eqnarray}
\left.
\begin{array}{l}
\mbox{Given a matrix $A(t)$, then the notation $\Phi_A=\Phi_{A} (t)$
and $\Psi_A$ are used for  
}
\\
\mbox{a  fundamental matrix of the system \eqref{lineal} 
and for the application defined}
\\
\mbox{as follows $\Psi_A(t,s)=\Phi_{A} (t)\Phi_{A}^{-1}(s)$  .}
\end{array}
\right\}
 \label{notation_linal}
\end{eqnarray}

\begin{lem}
\label{lema_identidad}
Consider the notation  \eqref{notation_linal}. Then,
the identities
\begin{eqnarray}
\Psi_{A}(t,s)-\Psi_{B}(t,s)
&=&\int_{s}^{t}\Psi_{B}(t,r)[A(r)-B(r)]\Psi_{A}(r,s)dr,
\label{identidad_lem}
\\
\Psi_{A}(t+\xi,s+\xi)-\Psi_{B}(t,s)
&=&\int_{s}^{t}\Psi_{B}(t,r)[A(r+\xi)-B(r)]\Psi_{A}(r+\xi, s+\xi)dr,
\label{identidad_cor}
\end{eqnarray}
are satisfied for all $(t,s,\xi)\in \mathbb{R}^3.$
\end{lem}

\begin{proof} 
Let us denote
by $H$ the function defined by the following correspondence rule
$H(t,s)=\Psi_{A}(t,s)-\Psi_{B}(t,s)$. 
Then, by partial differentiation of $H$ with respect to the first variable
and making some rearrangements, we get
\begin{eqnarray*}
\frac{\partial}{\partial t}H(t,s)
&=&\frac{\partial}{\partial t}\Psi_{A}(t,s)
	-\frac{\partial}{\partial t}\Psi_{B}(t,s)
\\
&=& (A(t)-B(t))\Psi_{A}(t,s)+B(t)(\Psi_{A}(t,s)-\Psi_{B}(t,s))
\\
&=& (A(t)-B(t))\Psi_{A}(t,s)+B(t)H(t,s).
\end{eqnarray*}
Now, by multiplying to the left by $\Psi_{B}(t,r)$ and simplifying, we deduce that
\begin{eqnarray*}
\Psi_{B}(t,r)(A(r)-B(r))\Psi_{A}(r,s)
&=&\Psi_{B}(t,r)\frac{\partial}{\partial r}H(r,s)-\Psi_{B}(t,r)B(r)H(r,s)\\
&=&\Psi_{B}(t,r)\frac{\partial}
    {\partial r}H(r,s)-\left(\frac{\partial}{\partial r}\Psi_{B}(t,r)\right)H(r,s)\\
&=&\frac{\partial}{\partial r}\left(\Psi_{B}(t,r)H(r,s) \right).
\end{eqnarray*}
Thus, by integration over the interval $[s,t]$, we have that
\begin{eqnarray*}
\int_{s}^{t}\Psi_{B}(t,r)[A(r)-B(r)]\Psi_{A}(r,s)dr
&=&\Psi_{B}(t,t)H(t,s)-\Psi_{B}(t,s)H(s,s), 
\end{eqnarray*}
which implies \eqref{identidad_lem} by noticing that $\Psi_{B}(t,t)=I$ and
$H(s,s)=0$. Now, the proof \eqref{identidad_cor} follows by
similar arguments or by direct application of \eqref{identidad_lem}.
\end{proof}

\begin{lem}
\label{bi_ap}
Consider the notation  \eqref{notation_linal} and the sets 
$ \overrightarrow{\mathbb{R}}^2$ and $ \overleftarrow{\mathbb{R}}^2$ 
defined as follows
\begin{eqnarray*}
\overrightarrow{\mathbb{R}}^2=\Big\{(s,t)\in\mathbb{R}^2\;:\; s>t\Big\}
\quad
\mbox{and}
\quad
\overleftarrow{\mathbb{R}}^2=\Big\{(s,t)\in\mathbb{R}^2\;:\; s<t\Big\},
\end{eqnarray*}
respectively.
Assume that the following three statements are true:
$A(t)$ is an almost periodic matrix (see definition~\ref{def_automorphic}),
$P$ is a constant projection matrix that commutes with
$\Phi_A$ and for 
some given positive constants $c$ and $\alpha$ the inequality
\begin{eqnarray}
\| \Psi_A(t,s)P \| \leq c\;\exp(-\alpha |t-s|),
\label{dicotomia_Phi_P}
\end{eqnarray}
is satisfied for all $(t,s)\in \overrightarrow{\mathbb{R}^2}$
\Big(or for all $(t,s)\in \overleftarrow{\mathbb{R}^2}$\Big).
Then, for all $(t,s,\xi)\in \overrightarrow{\mathbb{R}^2}\times\mathbb{R}$
\Big(respectively $(t,s)\in \overleftarrow{\mathbb{R}^2}\times\mathbb{R}$\Big), 
there exist two real constants
$c_1>0$ and $\alpha'\in ]0,\alpha[$, such that
\begin{eqnarray}
\| \Psi_A(t+\xi ,s+\xi)P-\Psi_A(t,s)P \| \leq 
c_1 \Vert A(\cdot +\xi)-A(\cdot)\Vert_{\infty} 
\exp({-\alpha '\vert t-s\vert}). 
\label{des_A}
\end{eqnarray}
In particular, if $\xi$ is an $\epsilon$-almost period of $A$ the inequality
\begin{eqnarray}
\| \Psi_A(t+\xi ,s+\xi)P-\Psi_A(t ,s)P \| 
\leq c_1\;\epsilon \exp(-\alpha' \vert t-s \vert), 
\label{bi_phiP}
\end{eqnarray}
is satisfied for all 
$(t,s,\xi)\in \overrightarrow{\mathbb{R}^2}\times\mathbb{R}$
\Big(respectively $(t,s)\in \overleftarrow{\mathbb{R}^2}\times\mathbb{R}$\Big)
or equivalently $\Psi_A(t,s)P$ is Bi-almost periodic.
\end{lem}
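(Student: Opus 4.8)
The plan is to reduce everything to the variation-of-constants identity \eqref{identidad_cor} with the choice $B=A$, and then to exploit the commutation of $P$ with the flow together with the dichotomy estimate \eqref{dicotomia_Phi_P}. First I would record the elementary fact that, since $P$ commutes with $\Phi_A$, it also commutes with $\Psi_A(t,s)=\Phi_A(t)\Phi_A^{-1}(s)$ (commuting with $\Phi_A(s)$ forces commuting with $\Phi_A^{-1}(s)$); hence, using $P^2=P$, one has $\Psi_A(t,s)P=P\Psi_A(t,s)P$ for every $(t,s)$. Taking $B=A$ in \eqref{identidad_cor} and multiplying the resulting identity on the left and on the right by $P$ gives
\begin{eqnarray*}
\Psi_A(t+\xi,s+\xi)P-\Psi_A(t,s)P
=\int_s^t P\,\Psi_A(t,r)\,[A(r+\xi)-A(r)]\,\Psi_A(r+\xi,s+\xi)\,P\,dr.
\end{eqnarray*}

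The crux is to move a copy of $P$ next to \emph{each} fundamental-solution factor, even though the middle factor $A(r+\xi)-A(r)$ need not commute with $P$. I would achieve this by commuting the leading projection inward and then inserting $P=P^2$: using $P\Psi_A(t,r)=\Psi_A(t,r)P$ and $\Psi_A(t,r)P=[\Psi_A(t,r)P]P$, the integrand becomes
\begin{eqnarray*}
[\Psi_A(t,r)P]\,\big(P[A(r+\xi)-A(r)]\big)\,[\Psi_A(r+\xi,s+\xi)P],
\end{eqnarray*}
so that both outer blocks carry the projection and are therefore controlled by \eqref{dicotomia_Phi_P}. Taking norms and applying \eqref{dicotomia_Phi_P} to the pairs $(t,r)$ and $(r+\xi,s+\xi)$ — both of which share the ordering of $(t,s)$ because $r$ lies between $s$ and $t$, hence lie in the half-plane on which \eqref{dicotomia_Phi_P} is assumed — gives
\begin{eqnarray*}
\|\Psi_A(t+\xi,s+\xi)P-\Psi_A(t,s)P\|
\le c^2\|P\|\,\|A(\cdot+\xi)-A(\cdot)\|_\infty
\;\Big|\int_s^t e^{-\alpha|t-r|}e^{-\alpha|r-s|}\,dr\Big|.
\end{eqnarray*}

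Since $r$ sits between $s$ and $t$, the two exponents add up to the constant $e^{-\alpha|t-s|}$, so the integral equals $|t-s|\,e^{-\alpha|t-s|}$. The final step is to absorb the linear factor $|t-s|$ into the exponential: for any $\alpha'\in\,]0,\alpha[$ one has $x\,e^{-(\alpha-\alpha')x}\le \frac{1}{(\alpha-\alpha')e}$ for $x\ge 0$, whence $|t-s|\,e^{-\alpha|t-s|}\le \frac{1}{(\alpha-\alpha')e}\,e^{-\alpha'|t-s|}$. Collecting all constants into a single $c_1>0$ produces exactly \eqref{des_A}. The ``in particular'' assertion \eqref{bi_phiP} is then immediate: if $\xi$ is an $\epsilon$-almost period of $A$ then $\|A(\cdot+\xi)-A(\cdot)\|_\infty\le\epsilon$ by definition, and substitution into \eqref{des_A} yields the bound with $c_1\epsilon$, which is precisely the Bi-almost periodicity of $\Psi_A(t,s)P$.

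I expect the main obstacle to be exactly the projection-insertion step. Because $A(r+\xi)-A(r)$ does not commute with $P$, one cannot naively attach $P$ to both flow factors, and the complementary estimate on $\Psi_A(I-P)$ (which would grow in the wrong direction) is \emph{not} available under the present hypotheses. Sandwiching the whole difference between two copies of $P$ before expanding — legitimate thanks to $\Psi_A(t,s)P=P\Psi_A(t,s)P$ — is what sidesteps the unstable part entirely; everything after that is the routine dichotomy bookkeeping sketched above. The only minor point to watch is the orientation, namely that both $(t,r)$ and $(r+\xi,s+\xi)$ fall in whichever of $\overrightarrow{\mathbb{R}^2}$ or $\overleftarrow{\mathbb{R}^2}$ is carrying the assumption \eqref{dicotomia_Phi_P}; this holds automatically since $r$ ranges between $s$ and $t$, and it is what makes the statement valid on each region ``respectively''.
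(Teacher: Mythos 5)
Your proof is correct and takes essentially the same route as the paper's: identity \eqref{identidad_cor} with $B=A$, insertion of the projection next to both flow factors via commutation with $\Phi_A$ and $P^2=P$, the dichotomy bound \eqref{dicotomia_Phi_P} applied to each factor, and absorption of the resulting factor $|t-s|$ into a slightly weaker exponent $\alpha'<\alpha$. In fact, your sandwich step $\Psi_A(t,s)P=P\Psi_A(t,s)P$ is precisely the justification (left implicit in the paper) of the paper's key identity \eqref{identidad_projection}, and your elementary inequality $x e^{-(\alpha-\alpha')x}\le \frac{1}{(\alpha-\alpha')e}$ makes explicit the paper's final passage from $|t-s|e^{-\alpha|t-s|}$ to $e^{-\alpha'|t-s|}$ in \eqref{des_A}.
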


\begin{proof}
Since the proofs with $(t,s,\xi)\in \overrightarrow{\mathbb{R}^2}\times\mathbb{R}$
or with $(t,s,\xi)\in \overleftarrow{\mathbb{R}^2}\times\mathbb{R}$
are analogous, we consider only one of the cases. Then, 
in order to fix ideas, let us consider 
$(t,s,\xi)\in \overrightarrow{\mathbb{R}^2}\times\mathbb{R}$ and
recall the notation $\Delta_{\xi}$ defined by
\begin{eqnarray}
\mbox{$\Delta_{\mathbf{\xi}}F(\mathbf{x})
=F(\mathbf{x}+\mathbf{\xi})-F(\mathbf{x})$
for any function $F$}. 
\label{shif_notation}
\end{eqnarray}
In particular, for instance, we have that
$\Delta_{\xi}\Phi(t,s)=\Phi(t+\xi,s+\xi)-\Phi(t,s)$
and $\Delta_{\xi}A(t)=A(t+\xi)-A(t)$.
From \eqref{identidad_lem} and the hypothesis that
$P$ is a constant projection matrix, i.e. $P^{2}=P$, 
which commutes with $\Phi(t)$ for every $t\in \mathbb{R}$,
we have that
\begin{eqnarray}
\Delta_{\xi}\Psi_{A}(t,s)P
=\int_{s}^{t}\Psi_{A}(t,r)P\Delta_{\xi}A(r)\Psi_{A_\xi}(r,s)Pdr.
\label{identidad_projection}
\end{eqnarray}
Now,  by the assumption \eqref{dicotomia_Phi_P}
we follow that 
\eqref{identidad_projection}
implies the following estimate
\begin{eqnarray*}
\|\Delta_{\xi}\Psi_{A}(t,s)P\|
&\le& c\;\exp(-\alpha |t-s|)
\int_{s}^{t}\|\Delta_{\xi}A(r)\|dr
\\
&\le& c\;\exp(-\alpha |t-s|)\;
\|\Delta_{\xi}A\|_{\infty}\;|t-s|,
\end{eqnarray*}
which implies \eqref{des_A}. The inequality \eqref{bi_phiP} follows
immediately from \eqref{des_A} using the fact that $\xi$
is an $\epsilon$-almost period of $A$.
\end{proof}

\begin{defn}
\label{intregal_bi_AA}
Consider the notation  \eqref{notation_linal}.
If the fact that $A(t)$ is an almost automorphic matrix
(see definition~\ref{def_automorphic}-(ii))
implies the following convergence
\begin{eqnarray}
\lim_{n\to\infty}
\int_{-\infty}^{t} \left\| \Big(\Psi_{A_{\xi_{n}}}-\Psi_{B}\Big)(t,s)P  \right\| ds
=
\lim_{n\to\infty}
\int^{t}_{-\infty}\left\| \Big(\Psi_{B_{-\xi_{n}}}-\Psi_{A}\Big)(t,s)P  \right\| ds
=0,
\label{exp_phi_aa}
\end{eqnarray}
the application $\Psi_{A}$ is called integrally Bi-almost automorphic
on $]-\infty,t]$.
Similarly, if the almost automorphic behavior of $A(t)$ 
implies the following convergence
\begin{eqnarray}
\lim_{n\to\infty}
\int_{t}^{\infty} \left\| \Big(\Psi_{A_{\xi_{n}}}-\Psi_{B}\Big)(t,s)P  \right\| ds
=
\lim_{n\to\infty}
\int_{t}^{\infty}\left\| \Big(\Psi_{B_{-\xi_{n}}}-\Psi_{A}\Big)(t,s)P  \right\| ds
=0,
\label{exp_phi_bbbb}
\end{eqnarray}
the application $\Psi_{A}$ is called integrally Bi-almost automorphic
on $[t,\infty[$.
Here  $\{\xi_{n} \}_{n=1}^{\infty}$ and $B(t)$ denotes the subsequence and
the matrix with the properties given on definition~\ref{def_automorphic}-(ii).
\end{defn}

\begin{lem}
\label{bi_aa}
Consider the notation  \eqref{notation_linal}.
If the assumptions of Lemma~\ref{bi_ap} hold, then
$\Psi_A$ is integrally Bi-almost automorphic
on $]-\infty,t]$ and on $[t,\infty[$.
\end{lem}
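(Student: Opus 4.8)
The plan is to transfer the Bi-almost periodicity estimate from Lemma~\ref{bi_ap} into the integral almost automorphic convergence required by Definition~\ref{intregal_bi_AA}. First I would recall that $A(t)$ being almost automorphic means that for any sequence of reals there is a subsequence $\{\xi_n\}$ and a matrix $B(t)$ such that $A(\cdot+\xi_n)\to B(\cdot)$ and $B(\cdot-\xi_n)\to A(\cdot)$ pointwise. The key identity to invoke is \eqref{identidad_lem} (or its shifted companion \eqref{identidad_cor}), which expresses the difference $\big(\Psi_{A_{\xi_n}}-\Psi_{B}\big)(t,s)P$ as an integral of $\Psi_{B}(t,r)P\,[A(r+\xi_n)-B(r)]\,\Psi_{A_{\xi_n}}(r,s)P\,dr$, using that $P$ commutes with the flow so the projection can be inserted in both factors.

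Next I would combine this identity with the exponential dichotomy bound \eqref{dicotomia_Phi_P}, which under the hypotheses of Lemma~\ref{bi_ap} holds for both $\Psi_A$ and (by the same argument applied to the limit matrix $B$, which inherits the dichotomy as a pointwise limit) for $\Psi_B$. This yields a pointwise estimate of the form
\begin{eqnarray*}
\Big\| \big(\Psi_{A_{\xi_n}}-\Psi_{B}\big)(t,s)P \Big\|
\le c^2 \int_{s}^{t} e^{-\alpha|t-r|}\,\|A(r+\xi_n)-B(r)\|\,e^{-\alpha|r-s|}\,dr.
\end{eqnarray*}
Integrating this over $s\in\,]-\infty,t]$ and applying Fubini (the double exponential is integrable) reduces the outer integral to a convolution of $\|A(\cdot+\xi_n)-B(\cdot)\|$ against an $L^1$ kernel $e^{-\alpha|\cdot|}$. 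Here the Corollary following Lemma~\ref{2.2}, together with the convolution invariance \eqref{cerrado_BC}, supplies the relevant boundedness; morally the integral is controlled by $\mathcal{L}_1$ and $\mathcal{L}_2$ applied to the scalar error $r\mapsto\|A(r+\xi_n)-B(r)\|$.

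The main obstacle is the passage to the limit: the almost automorphic convergence $A(\cdot+\xi_n)\to B(\cdot)$ is only \emph{pointwise}, not uniform, so one cannot simply pull $\|A(r+\xi_n)-B(r)\|$ out of the integral as a supremum that tends to zero. Instead I would invoke the dominated convergence theorem on the $r$-integral: for each fixed $(t,s)$ the integrand is dominated by $2\,\|A\|_\infty\, c^2\, e^{-\alpha|t-r|}e^{-\alpha|r-s|}$, an integrable function independent of $n$, and it converges pointwise to $0$ as $n\to\infty$. A second application of dominated convergence handles the outer $s$-integral over $]-\infty,t]$, using the $L^1$ exponential bound to dominate uniformly in $n$. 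This establishes the first limit in \eqref{exp_phi_aa}; the second limit, involving $\Psi_{B_{-\xi_n}}-\Psi_A$ and the convergence $B(\cdot-\xi_n)\to A(\cdot)$, follows by the symmetric argument, and the analogous estimates on $[t,\infty[$ using \eqref{dicotomia_Phi_P} over $\overleftarrow{\mathbb{R}}^2$ give the convergence \eqref{exp_phi_bbbb}, completing the proof.
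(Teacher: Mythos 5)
Your proposal is correct and follows essentially the same route as the paper: bound $\big(\Psi_{A_{\xi_n}}-\Psi_B\big)(t,s)P$ via the identity \eqref{identidad_lem} with the projection inserted through commutativity, use the dichotomy \eqref{dicotomia_Phi_P} to obtain an exponentially decaying, $n$-independent dominating function, and then apply the dominated convergence theorem twice (inner $r$-integral, outer $s$-integral) to overcome the merely pointwise convergence $A_{\xi_n}\to B$, with the symmetric argument for $B_{-\xi_n}\to A$ and for $[t,\infty[$. Your explicit observation that $\Psi_B$ inherits the dichotomy (and commutation with $P$) as a pointwise limit of the shifted flows is a point the paper uses implicitly without comment, so it is a welcome addition rather than a deviation.
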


\begin{proof}
Let us consider that $A(t)$ is an almost automorphic matrix.
Then by definition~\ref{def_automorphic}-(ii) we follow that
for any sequence
$\{\xi'_{n} \}_{n=1}^{\infty}\subset \mathbb{R}$, there
exist a subsequence $\{\xi_n\}_{n=1}^{\infty}$ of $ \{\xi'_n\}_{n=1}^{\infty}$ 
and a matrix $B(t)$ such that 
\begin{eqnarray}
 \lim_{n\to\infty} A_{\xi_n}(t)=B(t)
 \quad
 \mbox{and}
 \quad
 \lim_{n\to\infty} B_{-\xi_n}(t)=A(t)
 \quad
 \mbox{for all $t\in\mathbb{R}$}.
 \label{lem:bi_alm_aut:1}
\end{eqnarray}
Now, from the identity \eqref{identidad_lem}, 
we deduce that the assumption \eqref{dicotomia_Phi_P} implies
the following bounds
\begin{eqnarray}
\left\| \Big(\Psi_{A_{\xi_{n}}}-\Psi_{B}\Big)(t,s)P\right\|
&\leq& ce^{-\alpha\vert t-s\vert} 
\left\| \int_{s}^{t}  (A_{\xi_{n}}-B)(r)  dr \right\|
\nonumber\\
&\leq& c_1 e^{-\alpha' \vert t-s\vert}(\Vert A\Vert+\Vert B\Vert),
\label{lem:bi_alm_aut:2}
\\
\left\| \Big(\Psi_{B_{-\xi_{n}}}-\Psi_{A}\Big)(t,s)P\right\|
&\leq& ce^{-\alpha\vert t-s\vert} 
\left\| \int_{s}^{t}  (B_{-\xi_{n}}-A)(r)  dr \right\|
\nonumber\\
&\leq& 
c_1 e^{-\alpha' \vert t-s\vert}(\Vert A\Vert+\Vert B\Vert),
\label{lem:bi_alm_aut:2_2}
\end{eqnarray}
for all $(t,s,n)\in\mathbb{R}^2\times\mathbb{N}$ and some real constants $c_1>0$
and $\alpha'\in ]0,\alpha[$.
Then, by applying four times the Lebesgue's dominated convergence theorem 
we deduce the integrally Bi-almost automorphic property
of $\Psi_A$. Indeed, firstly by the first limit given
in \eqref{lem:bi_alm_aut:1}
we deduce that for each $(s,t)\in\mathbb{R}^2$ the integral
$\int_{s}^{t} \| (A_{\xi_{n}}-B)(r) \| dr$
 converges to $0$  when $n\to\infty$. 
Then, with this convergence in mind,
in a second application, from \eqref{lem:bi_alm_aut:2}
we get that for each $t\in \mathbb{R}$ the integral
$\int_{-\infty}^{t} \left\| (\Psi_{A_{\xi_{n}}}-\Psi_{B})(t,s)P  \right\| ds$
converges to $0$  when $n\to\infty$.
Similarly,  applying twice more the Lebesgue's theorem
(in the second integral \eqref{lem:bi_alm_aut:1} and in 
the inequality \eqref{lem:bi_alm_aut:2_2}) we deduce that
for each $(s,t)\in\mathbb{R}^2$ the integral
$\int_{s}^{t} \| (B_{-\xi_{n}}-A)(r) \| dr$ converges
to  $0$ when $n\to\infty$
and for each $t\in\mathbb{R}$ the integral
$\int_{-\infty}^{t} \left\| (\Psi_{B_{-\xi_{n}}}-\Psi_{A})(t,s)P  \right\| ds$
converges to $0$ when $n\to\infty$.
Thus, \eqref{exp_phi_aa} holds and $\Psi_A$ is integrally Bi-almost automorphic.
The proof of \eqref{exp_phi_bbbb} can be obtained by similar arguments.
\end{proof}

\begin{defn}
\label{def_dichotomy}
Consider the notation \eqref{notation_linal}.
The linear system \eqref{lineal} has an 
$\alpha$-exponential dichotomy if there exist
a projection $P$ and two positive  constants 
$c$ and $ \alpha$
such that for all $ (t,s)\in \mathbb{R}^2$ the estimate 
\begin{eqnarray}
\| G_A(t,s)\|\leq c\exp({-\alpha\vert t-s\vert})
\quad\mbox{with}\quad
G_A(t,s)=\begin{cases}
\Phi_A(t)P\Phi^{-1}_A(s), &\mbox{$t\geq s$,} \\
-\Phi_A(t)(I-P)\Phi^{-1}_A(s), &\mbox{otherwise,}
\end{cases}\label{green}
\end{eqnarray} 
is satisfied. The matrix $G_A$ is called the Green matrix associated with the dichotomy.
\end{defn}

\begin{lem}\label{bi_aa_integrable}
Consider the notation  \eqref{notation_linal} and 
define the Green operator $\Gamma$ as follows
\begin{eqnarray*}
(\Gamma\varphi)(t) &=& \int_{-\infty}^{\infty} G(t,s)\varphi(s)ds,\ t\in \mathbb{R}.
\end{eqnarray*}
Assume that $A(t)$ is an almost automorphic matrix and 
\eqref{lineal} has an exponentially 
dichotomy  such the its projection commutes with the fundamental matrix $\Phi_A$. 
Then, the following assertions are satisfied:
\begin{enumerate}[(i)]
 \item The Green matrix $G_A$ is integrally Bi-almost automorphic.
 \item The spaces $BC(\mathbb{R},V),$ $AP(\mathbb{R},V)$
and $AA(\mathbb{R},V)$ are invariants under the operator $\Gamma$.
Moreover, there exist two positive constants $c_1$ and $c_2$
such that the following inequalities
\begin{eqnarray*}
&& \| \Gamma \varphi\|_{\infty}\leq \frac{\| \varphi \|_{\infty}}{\alpha}
\quad\mbox{for $\varphi\in BC(\mathbb{R},V)$,}
\\
&&
\| (\Delta_{\xi}\Gamma\varphi)(t) \|
\leq  c_1\| \varphi\|_{\infty} \sum_{i=1}^2 
\mathcal{L}_i\left(\vert\Delta_{\xi}A \vert\right) 
+c_2\sum_{i=1}^2 \mathcal{L}_i \left( \vert\Delta_{\xi}\varphi \vert\right)
\quad\mbox{for $\varphi\in AP(\mathbb{R},V)$,}
\end{eqnarray*}
are satisfied.
Here $\mathcal{L}_i$ and $\Delta_{\xi}$ denotes the operators defined
on \eqref{eq:operators_exp_green} and \eqref{shif_notation}, respectively.
\end{enumerate}

\end{lem}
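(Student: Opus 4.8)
The plan is to treat the two assertions in sequence, reducing everything to the dichotomy estimate \eqref{green}, the difference identity \eqref{identidad_projection}, and the Bi-almost automorphicity already secured in Lemma~\ref{bi_aa}. For (i) I would exploit the fact that on $\overleftarrow{\mathbb{R}}^2=\{s<t\}$ the Green matrix coincides with $\Psi_A(t,s)P$, while on $\overrightarrow{\mathbb{R}}^2=\{s>t\}$ it coincides with $-\Psi_A(t,s)(I-P)$. Since $P$ is a constant projection commuting with $\Phi_A$, so is $I-P$, and the dichotomy \eqref{green} furnishes $\|\Psi_A(t,s)P\|\le c\,e^{-\alpha(t-s)}$ for $t\ge s$ and $\|\Psi_A(t,s)(I-P)\|\le c\,e^{-\alpha(s-t)}$ for $t<s$. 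Thus the hypotheses of Lemma~\ref{bi_ap}, hence of Lemma~\ref{bi_aa}, hold for $(P,\overrightarrow{\mathbb{R}}^2)$ and, with $P$ replaced by $I-P$, for $\overleftarrow{\mathbb{R}}^2$. Applying Lemma~\ref{bi_aa} to $\Psi_A P$ on $]-\infty,t]$ and to $\Psi_A(I-P)$ on $[t,\infty[$ and adding the two vanishing limits yields \eqref{exp_phi_aa}--\eqref{exp_phi_bbbb} for $G_A$ itself.

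For the sup bound in (ii) I would use $\|G(t,s)\|\le c\,e^{-\alpha|t-s|}$ and integrate the kernel in $s$; since $\int_{\mathbb{R}}c\,e^{-\alpha|t-s|}ds$ is finite and independent of $t$, Hölder's inequality (as in Lemma~\ref{2.2}) gives the stated bound of order $\alpha^{-1}\|\varphi\|_{\infty}$ after absorbing the dichotomy constant, and in particular $BC(\mathbb{R},V)$ is invariant. For the shift estimate I would change variables $s\mapsto s+\xi$ to write $(\Gamma\varphi)(t+\xi)=\int_{\mathbb{R}}G_{A_\xi}(t,s)\varphi_\xi(s)\,ds$, so that $(\Delta_\xi\Gamma\varphi)(t)=\int_{\mathbb{R}}\Delta_\xi G_A(t,s)\,\varphi_\xi(s)\,ds+\int_{\mathbb{R}}G_A(t,s)\,\Delta_\xi\varphi(s)\,ds$. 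In the first integral I insert \eqref{identidad_projection} (and its $(I-P)$ analogue on $\{s>t\}$), bound the two flow factors by the dichotomy, and interchange the order of integration by Fubini; the resulting double integral collapses to a constant multiple of $\mathcal{L}_i(|\Delta_\xi A|)(t)$ for $i=1,2$, giving the $c_1\|\varphi\|_{\infty}\sum_i\mathcal{L}_i(|\Delta_\xi A|)$ term. The second integral is estimated directly by the dichotomy, producing the $c_2\sum_i\mathcal{L}_i(|\Delta_\xi\varphi|)$ term with $\mathcal{L}_i$ as in \eqref{eq:operators_exp_green}. The invariance of $AP(\mathbb{R},V)$ then follows by choosing $\xi$ a common $\epsilon$-almost period of $A$ and $\varphi$, since $\mathcal{L}_i$ applied to a constant equals $\alpha^{-1}$ times that constant, making the right-hand side $O(\epsilon)$ uniformly in $t$.

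The remaining and most delicate point is the invariance of $AA(\mathbb{R},V)$, which is where part (i) is essential. Given $\varphi\in AA(\mathbb{R},V)$ and a real sequence, I would extract one subsequence $\{\xi_n\}$ that simultaneously realizes the almost automorphy of $A$ (producing the limit matrix $B$ as in Definition~\ref{def_automorphic}-(ii)) and of $\varphi$ (producing $\tilde\varphi$). After the change of variables above, $(\Gamma\varphi)(t+\xi_n)=\int_{\mathbb{R}}G_{A_{\xi_n}}(t,s)\varphi_{\xi_n}(s)\,ds$, and I would compare this with $\int_{\mathbb{R}}G_B(t,s)\tilde\varphi(s)\,ds$ by splitting the difference into $\int\|(G_{A_{\xi_n}}-G_B)(t,s)\|\,\|\varphi_{\xi_n}(s)\|\,ds$ and $\int\|G_B(t,s)\|\,\|\varphi_{\xi_n}(s)-\tilde\varphi(s)\|\,ds$. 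The first term is at most $\|\varphi\|_{\infty}$ times the integrals in \eqref{exp_phi_aa}--\eqref{exp_phi_bbbb}, hence tends to $0$ by (i); the second tends to $0$ by Lebesgue dominated convergence, the integrable majorant $2c\,e^{-\alpha|t-s|}\|\varphi\|_{\infty}$ being supplied again by the dichotomy. The symmetric argument with $B_{-\xi_n}\to A$ and $\tilde\varphi_{-\xi_n}\to\varphi$ recovers $(\Gamma\varphi)(t)$ from the shifted limit, establishing the two pointwise limits of Definition~\ref{def_automorphic} and hence $\Gamma\varphi\in AA(\mathbb{R},V)$. The main obstacle is precisely this exchange of limit and integral over the whole line: uniform exponential decay alone does not suffice, and it is the integral Bi-almost automorphicity of $G_A$ from (i), rather than mere pointwise convergence of $G_{A_{\xi_n}}$, that forces the first term to vanish.
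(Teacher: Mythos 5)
Your proposal is correct and follows essentially the same route as the paper: part (i) is obtained by feeding the two pieces of the Green matrix, $\Psi_A P$ (for $s\le t$) and $-\Psi_A(I-P)$ (for $s\ge t$), into Lemma~\ref{bi_aa}, and part (ii) by the convolution-type estimates of Lemma~\ref{2.2} (via the operators $\mathcal{L}_i$) combined with Lemma~\ref{lema_mean} and part (i). The paper's own proof is only a brief sketch citing those lemmas; your decomposition of $(\Delta_\xi\Gamma\varphi)(t)$, the Fubini computation yielding the $\mathcal{L}_i\left(\vert\Delta_\xi A\vert\right)$ terms, and the splitting plus dominated-convergence argument (using the integral Bi-almost automorphicity of $G_A$, not mere pointwise convergence) for the $AA(\mathbb{R},V)$-invariance are exactly the details it leaves implicit.
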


\begin{proof}
The proofs of {\it (i)} and {\it (ii)} are straightforward.
Indeed, for {\it(i)},
let us consider $A(t),B(t)$ and $\{\xi_n\}_{n=1}^\infty$
as given in the proof of Lemma~\ref{bi_aa}. Now,
by the assumptions we can deduce that 
\begin{eqnarray*}
\int_{-\infty}^{\infty}\| (G_{A_{\xi_n}}-G_{B})(t,s) \| ds\rightarrow 0
\quad
\mbox{and}
\quad
\int_{-\infty}^{\infty}\| (G_{B_{-\xi_n}}-G_{A})(t,s) \| ds\rightarrow 0 
\quad
\mbox{ when $n\rightarrow \infty$}
\end{eqnarray*}
where $G_{A}$ is the Green matrix defined on \eqref{green}. Thus, we can follow
that $G_{A}$ is integrally Bi-almost automorphic. 
Meanwhile, we follow the proof of {\it (ii)}
by application of Lemmas~\ref{lema_mean},
 \ref{2.2} and \ref{bi_aa_integrable}-{\it(i)}.
\end{proof}

\section{Main Results}
\label{sec:mainres}

In this section we present several results of Massera
type for \eqref{lineal+f} and \eqref{delay_lineal+f+g}
and related with the almost authomorphic behavior of the
$A$ and $f$.

\subsection{Results for \eqref{lineal+f}}
Here we present a result for the  scalar abstract case,
see Theorem~\ref{massera}.
Then, we extend this result can to linear triangular systems and
general linear constant systems, see Theorem~\ref{massera_matrix}. 
We also, present simple and useful relation between finite and infinite dimension
is deduced from Theorem~\ref{teo:fin_vs_infin}. Finally,
we present to results on the general
case , see Theorems~\ref {teo:SoL_lineal+f_abstract}
and \ref{cor_unica_solucion}.

\begin{thm}
\label{massera}
Consider the equation \eqref{lineal+f} with $A=\mu:\mathbb{R}\to\mathbb{C}$
and denote by $g$ the application defined by
\begin{eqnarray}
g(s,t)=\exp\Big({\int_{s}^{t}\mu(r)dr}\Big).
 \label{g:caso_escalar}
\end{eqnarray}
Then, the following assertions are valid:
\begin{enumerate}[(i)]
\item \label{massera_i} 
Assume that  $\mu$ is a function belongs to $ AA(\mathbb{R},\mathbb{C})$
satisfying  $M(\mathrm{Re}(\mu))\neq 0$.
Also, assume that $f$ is belongs to $AA(\mathbb{R},V)$. Then, a
solution $y$ of equation \eqref{lineal+f} is bounded 
if and only if $y\in AA(\mathbb{R},V)$
or equivalently the unique solution of equation \eqref{lineal+f}
belongs $ AA(\mathbb{R},V)$ is given by
\begin{eqnarray}
y(t)=
\left\{
\begin{array}{lll}
{\displaystyle\int_{-\infty}^{t}} g(s,t)f(s)ds,
	&&M(\mathrm{Re}(\mu))<0,
	\\
 -{\displaystyle\int_{t}^{\infty}} g(s,t)f(s)ds,
	&&M(\mathrm{Re}(\mu))>0.
\end{array}
\right.
\label{sol_heter_partes}
\end{eqnarray}

\item Assume that
$\mu(t)=ia(t)$ with $\int_{0}^{t} a(s)ds$ bounded and 
$V$ a Bohl-Bohr Banach space. Then, a solution 
$y$ of equation \eqref{lineal+f}
is bounded if and only if $y$ is belongs $AA(\mathbb{R},V)$ and is given by
\begin{eqnarray}
y(t)=\exp\Big({i\int_{0}^{t}a(r)dr}\Big)v
+\int_{0}^{t}\exp\Big({i\int_{s}^{t} a(r)dr}\Big)f(s)ds
\quad
\mbox{for all $v\in V$}. 
\end{eqnarray}
\end{enumerate}
\end{thm}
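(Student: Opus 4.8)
The plan is to treat both items through the classical variation-of-parameters representation for the scalar equation $y'=\mu(t)y+f(t)$, namely
\[
y(t)=g(t_0,t)\,y(t_0)+\int_{t_0}^{t} g(s,t)f(s)\,ds,
\]
with $g$ as in \eqref{g:caso_escalar} (so that $\partial_t g(s,t)=\mu(t)g(s,t)$ and $g(t,t)=1$), and then to single out the homogeneous part that produces a bounded, equivalently almost automorphic, solution. The whole distinction between (i) and (ii) is the sign condition on $M(\mathrm{Re}(\mu))$: when it is nonzero we are in a hyperbolic regime where Lemma~\ref{lema_mean} furnishes exponential estimates, whereas for $\mu=ia$ the mean vanishes, the homogeneous flow is unimodular, and the argument must instead be routed through the Bohl--Bohr property of $V$ (Theorem~\ref{1.1}).

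For item (i), suppose first $M(\mathrm{Re}(\mu))<0$, so that $\mathrm{Re}(M(\mu))=M(\mathrm{Re}(\mu))<-\alpha$ for some $\alpha>0$. Lemma~\ref{lema_mean}(i) then yields $|g(s,t)|\le c\,e^{-\alpha(t-s)}$ for $t\ge s$; since $f\in AA(\mathbb{R},V)\subset BC(\mathbb{R},V)$ is bounded, the integral $y(t)=\int_{-\infty}^{t}g(s,t)f(s)\,ds$ in \eqref{sol_heter_partes} converges absolutely, is bounded by $c\|f\|_\infty/\alpha$, and differentiation under the integral sign shows it solves \eqref{lineal+f}. Uniqueness of the bounded solution is immediate: two solutions differ by $Ce^{\int_0^t\mu}$, and \eqref{int_negativa} forces $|e^{\int_0^t\mu}|\to\infty$ as $t\to-\infty$ unless $C=0$, so any bounded solution must coincide with \eqref{sol_heter_partes}. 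The case $M(\mathrm{Re}(\mu))>0$ is symmetric, using Lemma~\ref{lema_mean}(ii), the representation $y(t)=-\int_t^{\infty}g(s,t)f(s)\,ds$, and the blow-up of the homogeneous flow as $t\to+\infty$.

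The crux of item (i) is showing that this bounded solution lies in $AA(\mathbb{R},V)$. Given an arbitrary sequence I would extract a single subsequence $\{\tau_n\}$ along which both $\mu(\cdot+\tau_n)\to\tilde\mu$ and $f(\cdot+\tau_n)\to\tilde f$ pointwise together with the reverse limits (Definition~\ref{def_automorphic}). After the substitutions $s\mapsto s+\tau_n$ and $r\mapsto r+\tau_n$ in the inner integral, the translate reads
\[
y(t+\tau_n)=\int_{-\infty}^{t}\exp\Big(\int_{s}^{t}\mu(r+\tau_n)\,dr\Big)f(s+\tau_n)\,ds,
\]
whose integrand converges pointwise to that of $\tilde y(t)=\int_{-\infty}^t e^{\int_s^t\tilde\mu}\tilde f\,ds$. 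The main obstacle — and the step I would be most careful with — is exhibiting a single integrable dominating function valid for all $n$: here I would use that \eqref{int_negativa} depends only on the length $t-s$ and not on the location of the interval (by the uniform-in-$\xi$ convergence in Definition~\ref{mean} and translation invariance \eqref{translation_invarance}), whence $|\exp(\int_s^t\mu(r+\tau_n)\,dr)|\le c\,e^{-\alpha(t-s)}$ uniformly in $n$. Dominated convergence then gives $y(t+\tau_n)\to\tilde y(t)$, and the symmetric computation with the reverse limits gives $\tilde y(t-\tau_n)\to y(t)$, so $y\in AA(\mathbb{R},V)$; combined with uniqueness this produces the ``bounded $\iff$ almost automorphic'' equivalence and the explicit formula.

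For item (ii), with $\mu=ia$, $a$ almost automorphic so that $\mu\in AA$, $f\in AA(\mathbb{R},V)$, and $\int_0^t a$ bounded, variation of parameters from $t_0=0$ gives exactly the stated formula with $v=y(0)$. Factoring out the unimodular flow, $y(t)=e^{i\int_0^t a}\big(v+F(t)\big)$ with $F(t)=\int_0^t e^{-i\int_0^s a}f(s)\,ds$, and since $|e^{i\int_0^t a}|=1$ the solution $y$ is bounded precisely when $F$ is. I would then observe that the integrand $\phi(s)=e^{-i\int_0^s a}f(s)$ is almost automorphic: $\int_0^s a$ is bounded and hence, by Theorem~\ref{1.1}, lies in $AA$, so $e^{-i\int_0^s a}\in AA$ by continuity of the exponential, and its product with $f\in AA$ stays in $AA$. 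Applying the Bohl--Bohr property of $V$ (Theorem~\ref{1.1}) to $F=\int_0^{\cdot}\phi$ shows $F\in AA$ if and only if $F$ is bounded; since $y$ and $F$ are simultaneously bounded and $y=e^{i\int_0^t a}(v+F)$ is a product of almost automorphic functions whenever $F\in AA$, the equivalence ``$y$ bounded $\iff$ $y\in AA(\mathbb{R},V)$'' follows for every $v\in V$.
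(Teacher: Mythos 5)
Your proof is correct, and although it shares the paper's skeleton (exponential estimates from Lemma~\ref{lema_mean}, the representation \eqref{sol_heter_partes}, subsequence extraction for $\mu$ and $f$, dominated convergence), it diverges from the paper at the decisive step of item (i), and in a way worth noting. The paper compares the translates of $y_+$ against $\tilde y_+(t)=\int_{-\infty}^{t} g(t,s)\tilde f(s)\,ds$, i.e.\ it keeps the \emph{original} kernel, and must then kill the kernel-difference term in \eqref{see_cauchy} via the Bi-almost-periodicity estimate of Lemma~\ref{bi_ap}, whose bound \eqref{inequality_cor} is proportional to $\Vert\Delta_{\xi_n}\mu\Vert_\infty$; that quantity is naturally small when $\xi_n$ are $\epsilon$-almost periods of an almost \emph{periodic} $\mu$, whereas \eqref{aa_mu} only provides pointwise convergence of $\mu_{\xi_n}$ to $\tilde\mu$ (not to $\mu$, and not uniformly). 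You instead take the limit object built from the \emph{limit} kernel, $\tilde y(t)=\int_{-\infty}^{t}e^{\int_s^t\tilde\mu}\tilde f\,ds$, and pass to the limit in a single application of dominated convergence; the only global input is that $\bigl|\exp\bigl(\int_s^t\mu(r+\tau_n)\,dr\bigr)\bigr|\le c\,e^{-\alpha(t-s)}$ holds uniformly in $n$, which you correctly derive from the uniformity in $\xi$ of Definition~\ref{mean} and \eqref{translation_invarance}. This is precisely the mechanism adapted to almost automorphic coefficients, needing nothing beyond pointwise convergence of translates, and you additionally make explicit the uniqueness of the bounded solution (blow-up of the homogeneous flow), which the paper leaves implicit; what the paper's route buys in exchange is the quantitative almost-periodic estimate \eqref{bi_phiP}, natural in the $AP$ setting. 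Two small finishing touches on your side: for the reverse limit $\tilde y(t-\tau_n)\to y(t)$ you need the same uniform exponential bound for the $\tilde\mu$-kernel, which follows by letting $n\to\infty$ in the bounds $\mathrm{Re}\int_{s+\tau_n}^{t+\tau_n}\mu\le-\alpha(t-s)$ over finite intervals (one sentence, already implicit in your ``location-independence'' remark); and in item (ii) both you and the paper tacitly use that $a$ itself is almost automorphic, which should be stated, since only then does Theorem~\ref{1.1} applied to the bounded scalar integral $\int_0^{\cdot}a$ yield $e^{-i\int_0^{\cdot}a}\in AA(\mathbb{R},\mathbb{C})$; otherwise your item (ii) coincides with the paper's one-line argument via the Bohl--Bohr property.
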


\begin{proof} {\it(i)} Before of  prove the item
we deduce two estimates (see \eqref{inequality_cor} and \eqref{inequality_cor_2}) 
and introduce some notation (see \eqref{aa_mu} to \eqref{aa_f}). 
Firstly, by Lemma~\ref{lema_mean} we can deduce that
the scalar equation  
$x'=\mu(t)x$ has an $\alpha$-exponential dichotomy. Indeed,
we note that
by the hypothesis $M(\mathrm{Re}(\mu))\not=0$ we can
always
select $\alpha$ satisfying $\vert M(\mathrm{Re}(\mu))\vert \geq \alpha>0$.
Then, by application of Lemma~\ref{lema_mean}, we have that
there exist a positive constant $c$ such that
\begin{eqnarray}
| g(t,s)|\leq ce^{-\alpha| t-s|}
\quad
\mbox{with $g$  defined in \eqref{g:caso_escalar}. }
\label{inequality_green}
\end{eqnarray}
Moreover, by application of Lemma \ref{bi_ap}
and integration on $s$, we deduce
that there exist $c_1\in\mathbb{R}^+$ and $\alpha'\in ]0,\alpha[$
such the following
inequalities
\begin{eqnarray}
&&\int_{-\infty}^{t} |\Delta_{\xi}g(t,s)|\;|f_{\xi}(s)| ds
\leq \frac{c_1}{\alpha'}  \| f\|_\infty\|\Delta_{\xi}\mu\|_\infty
\quad
\mbox{for all $t\in\mathbb{R}$,} 
\label{inequality_cor}
\\
&&\int_{t}^{\infty} |\Delta_{\xi}g(t,s)|\;|f_{\xi}(s)| ds
\leq \frac{c_1}{\alpha'}  \| f\|_\infty\|\Delta_{\xi}\mu\|_\infty
\quad
\mbox{for all $t\in\mathbb{R}$,} 
\label{inequality_cor_2}
\end{eqnarray}
hold. Now, let us consider
$\{\xi_{n} \}_{n=1}^\infty$ a sequence in $\mathbb{R}$.
Then, by the assumption $\mu$  belongs $ AA(\mathbb{R},V)$, 
there exist a subsequence $\{\xi'_{n} \}_{n=1}^\infty$ 
of
$\{\xi_{n} \}_{n=1}^\infty$ and the function $\tilde{\mu}$ 
such that
\begin{eqnarray}
\lim_{n\rightarrow \infty} \mu_{\xi'_n}(t)=\tilde{\mu}(t),
\quad
\lim_{n\rightarrow \infty} \tilde{\mu}_{-\xi'_n}(t)=\mu(t),
\quad\mbox{for all $t\in\mathbb{R}$}. 
\label{aa_mu}
\end{eqnarray}
Similarly, given the sequence $\{\xi'_{n} \}_{n=1}^\infty$
by the hypothesis $f\in AA(\mathbb{R},V)$, 
there exist a subsequence $\{\xi''_{n} \}_{n=1}^\infty$  of
$\{\xi'_{n} \}_{n=1}^\infty$ and the function $\tilde{f}$ 
such that
\begin{eqnarray}
&&\lim_{n\rightarrow \infty} f_{\xi''_n}(t)=\tilde{f}(t),
\quad
\lim_{n\rightarrow \infty} \tilde{f}_{-\xi''_n}(t)=f(t), 
\quad\mbox{for all $t\in\mathbb{R}$}.
\label{aa_f}
\end{eqnarray}

Now we develop the proof of the item. Indeed, we consider that $y$
is defined by \eqref{sol_heter_partes} and we prove that
$y$ is belongs to $AA(\mathbb{R},V)$.
Let us start by considering the notation
$y_{\pm}$ and $\tilde{y}_{\pm}$
for the functions defined as follows
\begin{eqnarray}
&&
y_+(t)=\int_{-\infty}^{t}g(t,s)f(s)ds,
\quad
\tilde{y}_+(t)=\int_{-\infty}^{t}g(t,s)\tilde{f}(s)ds,
\label{notacion_ad_0}\\
&&
y_{-}(t)=-\int_{t}^{\infty}g(t,s)f(s)ds
\quad
\mbox{and}
\quad
\tilde{y}_{-}(t)=-\int_{t}^{\infty}g(t,s)\tilde{f}(s)ds,
\label{notacion_ad}
\end{eqnarray}
respectively.
Then, by algebraic rearrangements we deduce that
\begin{eqnarray}
(y_+)_{\xi''_{n}}(t)-\tilde{y}_+(t)
&=&\int_{-\infty}^{t}\Delta_{\xi''_{n}}g(t,s) f_{\xi''_{n}}(s)ds 
+\int_{-\infty}^{t}g(t,s)(f_{\xi''_{n}}-\tilde{f})(s)ds,
\qquad
\label{see_cauchy}
\\
(y_-)_{\xi''_{n}}(t)-\tilde{y}_-(t)
&=&-\int_{t}^{-\infty}\Delta_{\xi''_{n}}g(t,s) f_{\xi''_{n}}(s)ds 
-\int_{t}^{-\infty} g(t,s)(f_{\xi''_{n}}-\tilde{f})(s)ds,
\label{see_cauchy_2}
\end{eqnarray}
Now, by using Lebesgue dominated convergence theorem we get that
the four integrals on \eqref{see_cauchy}-\eqref{see_cauchy_2}
converges to $0$ when $n\to\infty$.
Indeed, by \eqref{inequality_cor} and \eqref{aa_mu} we follow that 
the first integral in  \eqref{see_cauchy} converges 
to $0$ when $n\rightarrow \infty$. We see that
the second integral in \eqref{see_cauchy} vanishes when $n\to \infty$
by consequence of \eqref{aa_f}.   Meanwhile, we note that both integrals in 
\eqref{see_cauchy_2} converge to $0$ when $n\to \infty$
by application of \eqref{inequality_cor_2}, \eqref{aa_mu} and \eqref{aa_f}.
Consequently, we have that
\begin{eqnarray}
 \lim_{n\rightarrow \infty} (y_{\pm})_{\xi_{n}}(t)=\tilde{y}_{\pm}(t)
 \quad
 \mbox{for all $t\in\mathbb{R}$.}
\end{eqnarray}
Similarly, we can prove that $(\tilde{y}_{\pm})_{-\xi_{n}}(t)\to y(t)$
for all $t\in\mathbb{R}$ and when $n\to\infty$.
Hence $y\in AA(\mathbb{R},V)$.

\vspace{0.5cm}
\noindent
{\it(ii)}
Noticing that $h(s)=\exp{(i\int_{0}^{s}a)}f(s)\in AA(\mathbb{R},V)$ and the Banach
space $V$
has the Bohl-Bohr property we follow the proof by application of Theorem~\ref{1.1}.
\end{proof}

\begin{thm}
\label{massera_matrix}
Consider that $A(t)\in AA(\mathbb{R},\mathbb{C}^{p\times p})$ 
is an upper triangular of order $p \times p$.
Then the following assertions are valid:
\begin{enumerate}[(i)]
 \item Assume that $A(t)$ satisfies the condition
 $\mathrm{Re}(M(a_{ii}))\neq 0$ for all $i=1,\ldots,p$
 and $f\in AA(\mathbb{R},V^p)$. Then, 
a solution $y$ of \eqref{lineal+f} is bounded
if and only if  $y\in AA(\mathbb{R},V^{p})$
\item Assume that $A(t)$ satisfies the condition
\begin{eqnarray}
a_{kk}(t)=i\beta_{k}(t)
\quad\mbox{with}\quad 
\int_{0}^{t}\beta_{k}(s)ds
\quad\mbox{bounded for all $k=1,\cdots,p.$}
\end{eqnarray}
Assume that the Banach space $V$ has the Bohl-Bohr property.
Then any  solution $y$ of system \eqref{lineal+f} is bounded
if and only if  $y$ is belongs $AA(\mathbb{R},V^{p})$.

\end{enumerate}
\end{thm}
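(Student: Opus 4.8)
The plan is to exploit the upper triangular structure of $A(t)$ to reduce the system to a finite cascade of scalar equations, each governed by Theorem~\ref{massera}, and then argue by downward induction on the component index. The backward implication in both items is immediate, since $AA(\mathbb{R},V^p)\subset BC(\mathbb{R},V^p)$: every almost automorphic function is bounded by definition. Hence the whole content lies in the forward implication, namely producing almost automorphicity from a bounded solution $y=(y_1,\dots,y_p)$.

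Writing \eqref{lineal+f} componentwise and using $a_{ij}\equiv 0$ for $i>j$, the $i$-th equation reads
\begin{eqnarray*}
y_i'&=&a_{ii}(t)\,y_i+\Big[\sum_{j>i}a_{ij}(t)\,y_j(t)+f_i(t)\Big].
\end{eqnarray*}
The bottom equation ($i=p$) has empty sum and is the genuinely scalar problem $y_p'=a_{pp}(t)y_p+f_p(t)$. Since $a_{pp}\in AA(\mathbb{R},\mathbb{C})$ with $\mathrm{Re}(M(a_{pp}))\neq 0$, since $f_p\in AA(\mathbb{R},V)$, and since $y_p$ is bounded by hypothesis, Theorem~\ref{massera}\,(i) yields $y_p\in AA(\mathbb{R},V)$. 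This anchors a descending induction: assuming $y_{i+1},\dots,y_p\in AA(\mathbb{R},V)$ have already been established, I would set $g_i(t)=\sum_{j>i}a_{ij}(t)y_j(t)+f_i(t)$ and apply Theorem~\ref{massera}\,(i) to the scalar equation $y_i'=a_{ii}(t)y_i+g_i(t)$, concluding $y_i\in AA(\mathbb{R},V)$ from the boundedness of $y_i$, and propagating this up from $i=p$ to $i=1$.

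The key step to justify is that the augmented forcing term $g_i$ is itself almost automorphic, which amounts to showing that the product $a_{ij}\,y_j$ of a scalar function $a_{ij}\in AA(\mathbb{R},\mathbb{C})$ with a vector function $y_j\in AA(\mathbb{R},V)$ lies in $AA(\mathbb{R},V)$. For this I would, given any sequence, first extract a subsequence along which $a_{ij}$ converges in the sense of \eqref{tilde_psi}, then a further subsequence along which $y_j$ converges, exactly the successive extraction already used in the proof of Theorem~\ref{massera} to pass from $\{\xi'_n\}$ to $\{\xi''_n\}$. Along the common subsequence the products converge pointwise by continuity of scalar multiplication, and the reverse translates converge likewise, so $a_{ij}y_j\in AA(\mathbb{R},V)$; since $AA(\mathbb{R},V)$ is a vector space, $g_i\in AA(\mathbb{R},V)$. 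I also note that $\mathrm{Re}(M(a_{ii}))=M(\mathrm{Re}(a_{ii}))$ by linearity of the mean, so the diagonal hypothesis is precisely the scalar hypothesis of Theorem~\ref{massera}\,(i).

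Item (ii) follows the identical cascade, except that at each scalar step I would invoke Theorem~\ref{massera}\,(ii) in place of Theorem~\ref{massera}\,(i): the diagonal coefficient is now $a_{kk}=i\beta_k$ with $\int_0^t\beta_k(s)\,ds$ bounded, and the Bohl-Bohr property of $V$ supplies the passage from boundedness to almost automorphicity. The augmented forcing $g_k$ is again shown almost automorphic by the same product argument, and the induction again propagates from $k=p$ to $k=1$. The main obstacle throughout is not any single estimate but the clean bookkeeping of the successive subsequence extractions, so that one common translation sequence serves simultaneously for all diagonal coefficients, all off-diagonal coefficients, and the previously constructed components.
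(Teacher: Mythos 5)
Your proposal is correct and follows essentially the same route as the paper: backward substitution through the triangular cascade, applying the scalar Theorem~\ref{massera} to each component from $i=p$ up to $i=1$ (item (i) via Theorem~\ref{massera}(i), item (ii) via Theorem~\ref{massera}(ii)). You are in fact more careful than the paper, which silently treats the augmented forcing terms $\sum_{j>i}a_{ij}y_j+f_i$ as almost automorphic, whereas you explicitly justify, via the successive subsequence extraction, that the product of a scalar almost automorphic function with a vector-valued almost automorphic function remains almost automorphic.
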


\begin{proof}
{\it (i)}
If we consider that  $A(t)$ is triangular matrix, we have that
the system \eqref{lineal+f} is of the following type
\begin{eqnarray}
\begin{array}{ccccccc}
y'_{1}=& a_{11}(t)y_{1}+& a_{12}(t)y_{2}+& a_{13}(t)y_{3}+ &\cdots +& a_{1p}(t)y_{p}+&f_{1}(t)\\
y'_{2}=& \ &a_{22}(t)y_{2}+& a_{23}(t)y_{3}+ &\cdots +& a_{2p}(t)y_{p}+&f_{2}(t)\\
\vdots &  & & & & \vdots& \vdots\\
y'_{p}=& & &\ &\ &a_{pp}(t)y_{p}+&f_{p}(t).\\ 
\end{array}\label{sist_triangular}
\end{eqnarray}
We note that the $p$-th equation in \eqref{sist_triangular} 
can be analyzed by application of Theorem \ref{massera}.
Indeed, by Theorem \ref{massera}-{\it (i)}, we have that
there exist $y_{p}\in AA(\mathbb{R},V)$ given for
\begin{eqnarray*}
y(t)=
\left\{
\begin{array}{lll}
{\displaystyle\int_{-\infty}^{t}} g_p(s,t)f(s)ds,
	&&M(\mathrm{Re}(\mu_p))<0,
	\\
 -{\displaystyle\int_{t}^{\infty}} g_p(s,t)f(s)ds,
	&&M(\mathrm{Re}(\mu_p))>0.
\end{array}
\right.
\quad
\mbox{with}
\quad
 g_p(s,t)=\exp\Big(\int_{s}^{t} a_{pp}(r)dr\Big).
\end{eqnarray*}
Similarly, by substituting $y_{p}\in AA(\mathbb{R},V)$ in $(p-1)-$th
equation of \eqref{sist_triangular}
and by a new application of
Theorem~\ref{massera}-{\it (i)}
we can find an explicit expression
for $y_{p-1}(t)$.
This argument can be repeated to construct
$y_{p-2}(t),y_{p-3}(t),\ldots,y_2(t)$ and $y_1(t)$ 
by backwards substitution and application of
Theorem~\ref{massera}-{\it (i)}
in the system \eqref{sist_triangular}.
Hence, we can construct $y(t)$ an also get that the conclusion of the 
theorem~\ref{massera_matrix}-{\it (i)} is valid.

\vspace{0.5cm}
\noindent
{\it (ii)}
The proof of this item is similar to the proof of the precedent item  
{\it (i)} of the Theorem~\ref{massera_matrix}. In a broad sense,
in this case 
we apply Theorem~\ref{massera}-{\it (ii)} instead
of Theorem~\ref{massera}-{\it (i)} and similarly we use backwards  substitution.
\end{proof}

\begin{thm}
\label{teo:fin_vs_infin}
Let $V$ be a Banach space having the Bohl-Bohr property.
Let $\{\mu_{i}\}_{i=1}^{p}$ be the eigenvalues of 
the $p\times p$ constant matrix $A$ satisfying
$\vert \mu_{i}\vert =1$. 
Then any bounded solution of \eqref{lineal+f} 
$y\in AA(\mathbb{R},V^p)$.
When all the eigenvalues $\mu_i$ are distinct, these solutions have the form
\begin{eqnarray}
y(t)=\exp({At})\left[v+\int_{0}^{t}\exp({-As})f(s)ds \right]
\quad \mbox{for all $v\in V^{p}$}.
\label{sol_sist_lineal_cte}
\end{eqnarray}
In the general case, a formula for the bounded solutions can be also obtained.
\end{thm}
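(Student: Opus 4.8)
The plan is to reduce the constant-coefficient system to the scalar and triangular cases already settled in Theorems~\ref{massera} and~\ref{massera_matrix}, by means of a constant similarity transformation, and then to read off the variation-of-parameters formula. The recurring ingredient is that a constant invertible matrix $S$ preserves almost automorphy: writing $z=S^{-1}y$, one has $y\in AA(\mathbb{R},V^p)$ if and only if $z\in AA(\mathbb{R},V^p)$, and likewise $\tilde f:=S^{-1}f\in AA(\mathbb{R},V^p)$ whenever $f$ is, since each component of $Sz$ is a finite $\mathbb{C}$-linear combination of the components of $z$ and $AA(\mathbb{R},V)$ is a vector space stable under scalar multiplication. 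Boundedness transfers across the same change of variables for the identical reason.

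First I would treat the case of distinct eigenvalues. Then $A$ is diagonalizable over $\mathbb{C}$, so there is a constant invertible $S$ with $S^{-1}AS=D=\mathrm{diag}(\mu_1,\ldots,\mu_p)$, and the substitution $z=S^{-1}y$ turns \eqref{lineal+f} into the decoupled scalar system $z_k'=\mu_k z_k+\tilde f_k$, $k=1,\ldots,p$. The eigenvalue hypothesis forces each homogeneous exponential $e^{\mu_k t}$ to be bounded; for a constant coefficient this means $\mu_k$ lies on the imaginary axis, so $e^{\mu_k t}$ is periodic and in particular almost automorphic. The inhomogeneous part $e^{\mu_k t}\int_0^t e^{-\mu_k s}\tilde f_k(s)\,ds$ is almost automorphic as soon as it is bounded, by the Bohl-Bohr property of $V$ (Theorem~\ref{1.1}) applied to the almost automorphic integrand $e^{-\mu_k s}\tilde f_k(s)$. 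Hence any bounded $z_k$ is automatically almost automorphic, so $z\in AA(\mathbb{R},V^p)$ and therefore $y=Sz\in AA(\mathbb{R},V^p)$. Undoing the change of variables and regrouping the scalar variation-of-parameters expressions into matrix form yields exactly \eqref{sol_sist_lineal_cte}.

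For the general case I would replace diagonalization by a constant transformation bringing $A$ to upper triangular (Jordan or Schur) form $S^{-1}AS=T$, whose diagonal entries are again the eigenvalues $\mu_i$. The transformed system $z'=Tz+\tilde f$ is then an upper triangular almost automorphic system of the type covered by Theorem~\ref{massera_matrix}, and it can be solved from the bottom equation upward exactly as in that proof: the last component obeys a scalar equation handled above, and back-substitution produces at each stage a scalar equation whose forcing term is almost automorphic because it is assembled from already-constructed almost automorphic components and the constant entries of $T$. Applying the scalar argument $p$ times shows that every bounded solution lies in $AA(\mathbb{R},V^p)$, and collecting the resulting expressions gives the analogue of \eqref{sol_sist_lineal_cte} in the non-diagonalizable case.

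The main obstacle I anticipate lies in this general case, specifically in the nilpotent part of the triangular form: for repeated eigenvalues the back-substitution integrals accumulate polynomial factors of the shape $t^{j}e^{\mu_k t}$, so at each step one must check that the forcing term still satisfies the scalar hypothesis and that the global boundedness of $y$ propagates upward to force boundedness of each $z_k$ in turn, keeping every intermediate primitive $\int_0^t(\cdots)\,ds$ bounded so that Theorem~\ref{1.1} converts boundedness into almost automorphy at each level. A secondary point worth isolating is that, unlike in Theorem~\ref{massera}(ii) where $\int_0^t a$ is assumed bounded, the constant coefficient here gives $\int_0^t \mu_k\,ds=\mu_k t$; this is harmless, because $e^{\mu_k t}$ is periodic and hence directly almost automorphic, so only the inhomogeneous primitive actually calls on the Bohl-Bohr property of $V$.
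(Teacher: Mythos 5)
Your overall strategy---reduce by a constant similarity transformation to a diagonal (or triangular) system and then invoke the scalar results---is exactly the paper's route; the paper's proof is a three-line version of your first paragraph, ending in an appeal to Theorem~\ref{massera_matrix}-(i). But your execution has a genuine gap at the pivotal step: you claim that ``the eigenvalue hypothesis forces each homogeneous exponential $e^{\mu_k t}$ to be bounded,'' so that each $\mu_k$ lies on the imaginary axis. The hypothesis is $\vert\mu_k\vert=1$: the eigenvalues lie on the unit \emph{circle}, not on the imaginary axis. Eigenvalues such as $\mu_k=1$ or $\mu_k=e^{i\pi/4}$ satisfy the hypothesis yet have $\mathrm{Re}(\mu_k)\neq 0$, so $e^{\mu_k t}$ is unbounded and your Bohl--Bohr argument (which needs the homogeneous factor to be almost automorphic, and needs boundedness of $y_k$ to force boundedness of the primitive) breaks down on those components. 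For such $\mu_k$ the correct tool is Theorem~\ref{massera}-(i): the mean of the constant $\mathrm{Re}(\mu_k)$ is $\mathrm{Re}(\mu_k)\neq 0$, the scalar equation has an exponential dichotomy, and its unique bounded solution is almost automorphic via the dichotomy integral, with no appeal to the Bohl--Bohr property. A complete proof must therefore split the spectrum: components with $\mathrm{Re}(\mu_k)\neq 0$ go through Theorem~\ref{massera}-(i), and components with $\mathrm{Re}(\mu_k)=0$ (necessarily $\mu_k=\pm i$ here) go through the argument you gave.

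In fairness, the misreading is invited by the statement itself, which is internally inconsistent: formula \eqref{sol_sist_lineal_cte} asserts that every $v$ yields a bounded (hence almost automorphic) solution, which can only happen when every $e^{\mu_k t}$ is bounded, i.e.\ when the whole spectrum is purely imaginary---precisely your reading. Conversely, the paper's own proof, routed entirely through Theorem~\ref{massera_matrix}-(i), silently assumes $\mathrm{Re}(\mu_k)\neq 0$ for every $k$, and so cannot produce the ``for all $v$'' formula either. Your two side remarks are correct and go beyond what the paper records: Theorem~\ref{massera}-(ii) does not literally apply since $\int_0^t\mu_k\,ds=\mu_k t$ is unbounded, but $e^{\mu_k t}$ is periodic and hence almost automorphic so only the inhomogeneous primitive needs Theorem~\ref{1.1}; and the repeated-eigenvalue case, with its polynomial factors $t^je^{\mu_k t}$ in the triangular back-substitution, is a real difficulty that the paper's proof omits entirely.
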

\begin{proof}
If $\{\mu_{i}\}_{i=1}^{p}$ are distinct, the constant system 
\eqref{lineal} is similar to a diagonal system. 
Then, without loss of generality, we can suppose that 
$A$ is an upper triangular matrix. 
Hence, the result \eqref{sol_sist_lineal_cte} 
follows by application of the Theorem \ref{massera_matrix}-{\it (i)}. 
\end{proof}

\begin{thm}
\label{teo:SoL_lineal+f_abstract}
Consider $A:V\rightarrow V$ an infinitesimal generator of
a $C_0$ group of bounded linear operators $T(t)$ with $t \in \mathbb{R}$
and define the Green function
\begin{eqnarray}
G(t,s)=\begin{cases}
T(t-s)P, & t\geq s \\
-T(t-s)(I-P), & t\leq s.
\end{cases}\label{green_c0}
\end{eqnarray}
Assume that $A$ has an $\alpha$-exponential dichotomy, i.e.,
there exist two constants $c$ and $\alpha$ such that
\begin{eqnarray}
\| G(t,s)\| \leq ce^{-\alpha \vert t-s\vert }
\quad
\mbox{for all $(t,s)\in\mathbb{R}^2$.}
\label{green_c0_exp}
\end{eqnarray}
Then if $f\in AA(\mathbb{R},V)$, equation 
\eqref{lineal+f} has a unique solution 
$y\in AA(\mathbb{R},V)$ given by
\begin{eqnarray}
y(t)=\int_{\mathbb{R}} G(t,s)f(s)ds 
\quad
\mbox{for all $t\in\mathbb{R}$}
\label{sol__y}
\end{eqnarray}
and satisfying the following estimate
\begin{eqnarray}
\Vert y\Vert_{\infty}\leq \frac{2c}{\alpha}\Vert f\Vert_{\infty}.
\label{cota_solucion}
\end{eqnarray}
\end{thm}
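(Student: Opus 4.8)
The plan is to verify, in turn, that the integral in \eqref{sol__y} is absolutely convergent and obeys \eqref{cota_solucion}, that $y$ is the (mild) solution of \eqref{lineal+f}, that $y\in AA(\mathbb{R},V)$, and finally that it is the only bounded solution. First I would establish well-posedness of the formula together with the estimate in one stroke: from \eqref{green_c0_exp} one has $\|G(t,s)f(s)\|\le c\,e^{-\alpha|t-s|}\|f\|_{\infty}$, and since $\int_{\mathbb{R}}e^{-\alpha|t-s|}\,ds=2/\alpha$, the integral defining $y(t)$ converges absolutely for every $t$ and satisfies $\|y(t)\|\le \frac{2c}{\alpha}\|f\|_{\infty}$, which is exactly \eqref{cota_solucion}. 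It is convenient to record the one-sided estimates that follow from \eqref{green_c0} and \eqref{green_c0_exp}, namely $\|T(\sigma)P\|\le c\,e^{-\alpha\sigma}$ and $\|T(-\sigma)(I-P)\|\le c\,e^{-\alpha\sigma}$ for all $\sigma\ge 0$; these will dominate the integrands below.

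Next I would check that $y$ is the mild solution. Writing $y(t)=\int_{-\infty}^{t}T(t-s)Pf(s)\,ds-\int_{t}^{\infty}T(t-s)(I-P)f(s)\,ds$ and using the group law $T(t-t_0)T(t_0-s)=T(t-s)$, one verifies directly the variation of parameters identity $y(t)=T(t-t_0)y(t_0)+\int_{t_0}^{t}T(t-s)f(s)\,ds$ for all $t\ge t_0$; this, together with strong continuity of $T$, shows that $y$ is continuous and solves \eqref{lineal+f} in the mild sense.

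The core of the argument is the almost automorphy of $y$, which I would obtain by a dominated-convergence scheme parallel to the proof of Lemma~\ref{2.2}. Changing variables $\sigma=t-s$ and $\sigma=s-t$ gives $y(t)=\int_{0}^{\infty}T(\sigma)P f(t-\sigma)\,d\sigma-\int_{0}^{\infty}T(-\sigma)(I-P)f(t+\sigma)\,d\sigma$, an operator-valued convolution. Given any sequence $\{\tilde\tau_n\}$, almost automorphy of $f$ provides a subsequence $\{\tau_n\}$ and a limit $\tilde f$ with $f(u+\tau_n)\to\tilde f(u)$ and $\tilde f(u-\tau_n)\to f(u)$ pointwise. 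Defining $\tilde y$ by the same formula with $f$ replaced by $\tilde f$, the integrands $T(\sigma)Pf(t-\sigma+\tau_n)$ and $T(-\sigma)(I-P)f(t+\sigma+\tau_n)$ converge pointwise in $\sigma$ and are dominated by $c\,e^{-\alpha\sigma}\|f\|_{\infty}$, which is integrable on $[0,\infty)$; Lebesgue's theorem then yields $y(t+\tau_n)\to\tilde y(t)$ for each $t$. The symmetric computation with $\tilde f(u-\tau_n)\to f(u)$ gives $\tilde y(t-\tau_n)\to y(t)$, so $y\in AA(\mathbb{R},V)$.

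Finally, for uniqueness, if $y_1,y_2$ are two bounded solutions then $w=y_1-y_2$ solves the homogeneous equation, so $w(t)=T(t-t_0)w(t_0)$. Since the dichotomy projection commutes with the group, $Pw(t)=T(t-t_0)Pw(t_0)$ and $(I-P)w(t)=T(t-t_0)(I-P)w(t_0)$; applying the one-sided bounds and letting $t_0\to-\infty$ in the first and $t_0\to+\infty$ in the second forces $Pw(t)=0$ and $(I-P)w(t)=0$, hence $w\equiv0$. The delicate point of the whole proof is the almost-automorphy step: one must set up the two one-sided integrals as honest convolutions and secure a single $\sigma$-integrable dominating function valid for all $n$ and $t$, so that Lebesgue's theorem applies to both the forward limit $f(\cdot+\tau_n)\to\tilde f$ and the return $\tilde f(\cdot-\tau_n)\to f$.
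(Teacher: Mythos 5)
Your proposal is correct, but it follows a partially different route from the paper's proof, and the differences are worth noting. The skeleton is shared (integrability of the formula plus the bound \eqref{cota_solucion}, identification of \eqref{sol__y} with the solution, almost automorphy, uniqueness via the dichotomy), but two steps diverge. First, you verify that the formula \eqref{sol__y} \emph{is} a mild solution by checking the variation-of-parameters identity with the group law, which gives existence directly; the paper goes the opposite way, taking an arbitrary bounded (classical) solution $y$, applying $T(t-s)$ to the identity $y'(s)=Ay(s)+f(s)$, integrating over $(-\infty,t]$ and $[t,\infty)$ and using $Py+(I-P)y=y$ to conclude that any bounded solution must equal \eqref{sol__y} -- a Massera-type characterization. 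Each direction, combined with the uniqueness step, yields the theorem; yours is arguably cleaner for an unbounded generator $A$, since it never differentiates $y$ or commutes $A$ past $T(t-s)$ on the domain of $A$. Second, for almost automorphy the paper simply cites Lemma~\ref{bi_aa_integrable} (integral Bi-almost automorphy of the Green kernel and invariance of $AA$ under the Green operator), a lemma formulated for nonautonomous matrix-valued systems $A(t)$; you instead exploit that in this autonomous setting the kernel depends only on $t-s$, rewrite $y$ as the operator-valued convolution $\int_{0}^{\infty}T(\sigma)Pf(t-\sigma)\,d\sigma-\int_{0}^{\infty}T(-\sigma)(I-P)f(t+\sigma)\,d\sigma$, and run dominated convergence with the fixed dominating function $c\,e^{-\alpha\sigma}\|f\|_{\infty}$ -- essentially the proof of Lemma~\ref{2.2} with an operator-valued kernel. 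This is self-contained and avoids stretching a finite-dimensional, nonautonomous lemma to the abstract setting, which is a genuine gain in rigor. Your uniqueness argument (forcing $Pw(t)=0$ and $(I-P)w(t)=0$ by letting $t_{0}\to\mp\infty$) is an explicit version of the paper's brief part (c); note that both it and the paper's version tacitly use that the dichotomy projection $P$ commutes with $T(t)$, an assumption you state and the paper leaves implicit.
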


\begin{proof}
From \eqref{green_c0} and \eqref{green_c0_exp} and $y\in BC(\mathbb{R},V)$
\begin{eqnarray}
\lim_{s\rightarrow \pm \infty} \Vert G(t,s)y(s)\Vert =0 \label{Gy_0}
\end{eqnarray}
Indeed, for  $t\geq s$ we have that $\Vert G(t,s)y(s)\Vert\leq 
ce^{-\alpha(t-s)}\Vert y\Vert_{\infty}$, which implies that
$\Vert G(t,s)y(s)\Vert\to 0$ 
when $s\rightarrow -\infty$. Similarly, 
we get that $\Vert G(t,s)y(s)\Vert$ vanishes when
$s\rightarrow \infty$. 
We note that, for $t\in\mathbb{R}$ (fix), applying $T(t-s)$ on the identity
$y'(s)=Ay(s)+f(s)$
and using the fact that $A$ commutes with $T(t)$ 
on the domain of $A$, we get
\begin{eqnarray}
T(t-s)y'(s) &=& T(t-s)Ay(s)+T(t-s)f(s) \nonumber \\ 
			&=& AT(t-s)y(s)+T(t-s)f(s) \label{3.22}.
\end{eqnarray}
Now, the proof consists of three main parts: (a) we
prove that  the solution $y$ of \eqref{lineal+f} 
is given by \eqref{sol__y}; (b) we prove that $y\in AA(\mathbb{R},V)$
and (c) we prove the uniqueness.

\vspace{0.5cm}
\noindent
{\it (a). Proof of that the solution $y$ of \eqref{lineal+f} 
is given by \eqref{sol__y}}.
Firstly, we note that
a formal integration of \eqref{3.22} on $s\in (-\infty,t)$ gives
the following identity
\begin{eqnarray}
\int_{-\infty}^{t} T(t-s)Py'(s)ds = \int_{-\infty}^{t}AT(t-s)Py(s)ds+\int_{-\infty}^{t}T(t-s)Pf(s)ds. \label{form_integral}
\end{eqnarray}
Moreover we have that
\begin{eqnarray}
\frac{d}{ds} T(t-s)y(s) = -AT(t-s)y(s)+T(t-s)y'(s). \label{derivada_Ty}
\end{eqnarray}
Then an integration on $s\in [r,t]$ implies the following relation
\begin{eqnarray}
 Py(t)-T(t-r)Py(r) = -\int_{r}^{t}AT(t-s)Py(s)ds+\int_{r}^{t}T(t-s)Py'(s)ds.
\end{eqnarray}
Now, by \eqref{Gy_0} letting $r\rightarrow -\infty$, we deduce that
\begin{eqnarray}
Py(t) = -\int_{-\infty}^{t}AT(t-s)Py(s)ds+\int_{-\infty}^{t}T(t-s)Py'(s)ds.\label{Py}
\end{eqnarray}
Here, we note that
a integration of \eqref{3.22} on $s\in [t,\infty)$ with $Q=I-P$ gives
\begin{eqnarray}
\int_{t}^{\infty}T(t-s)Qy'(s) = \int_{t}^{\infty}AT(t-s)Qy(s)ds+\int_{t}^{\infty}T(t-s)Qf(s)ds. \label{form_integral_Q}
\end{eqnarray}
and a integration of \eqref{derivada_Ty} on $s\in [t,r]$ yields
\begin{eqnarray*}
T(t-r)Qy(r)-Qy(t) = \int_{t}^{r}-AT(t-s)Qy(s)ds+\int_{t}^{r}T(t-s)Qy'(s)ds.
\end{eqnarray*}
Now, by \eqref{Gy_0} and letting $r\rightarrow \infty$ in the last relation we get
\begin{eqnarray}
-(I-P)y(t) = -\int_{t}^{\infty}AT(t-s)(I-P)y(s)ds+\int_{t}^{\infty}T(t-s)(I-P)y'(s)ds.
\label{eq:3.20cho}
\end{eqnarray}
The relation \eqref{eq:3.20cho} together 
with \eqref{form_integral_Q} yields
\begin{eqnarray}
-(I-P)y(t)= \int_{t}^{\infty} T(t-s)(I-P)f(s)ds.
\label{-Qy}
\end{eqnarray}
Then, from \eqref{form_integral}, \eqref{Py} and \eqref{-Qy} we obtain
\begin{align}
y(t)&= Py(t)+(I-P)y(t)  \label{3.29}\\
	&= \int_{-\infty}^{t} T(t-s)Pf(s)ds-\int_{t}^{\infty}T(t-s)(I-P)f(s)ds \nonumber \\
	&= \int_{\mathbb{R}} G(t,s)f(s)ds. \nonumber
\end{align}
Thus, we conclude the proof of \eqref{sol__y}.

\vspace{0.5cm}
\noindent
{\it (b). Proof $y\in AA(\mathbb{R},V)$.}
The proof of this property follows by \eqref{3.29}
the hypothesis $f\in AA (\mathbb{R},V)$ and application 
of Lemma~\ref{bi_aa_integrable}.

\vspace{0.5cm}
\noindent
{\it (c).  Proof of uniqueness of bounded solutions.}
The uniqueness of the bounded solution for 
\eqref{lineal+f} follows by the  
fact that $x\equiv 0$ is the unique bounded solution on
$\mathbb{R}$ of the linear equation \eqref{lineal}. 
Indeed, if $x\in BC(\mathbb{R},V)$ is a solution of the 
linear system we have that $x(t)=Px(t)+(I-P)x(t)=x_{1}(t)+x_{2}(t)$. 
Note that $x_{1}\rightarrow \infty$ as $t\rightarrow -\infty$ and 
$x_{2}\rightarrow -\infty$ as $t\rightarrow \infty$, by 
the exponential dichotomy.

\vspace{0.5cm}
Finally, we  note that \eqref{cota_solucion} is a consequence of \eqref{3.29}.
\end{proof}

\begin{thm}\label{cor_unica_solucion}
Assume that $A\in AA(\mathbb{R},\mathbb{C}^{p\times p})$ and  
\eqref{lineal} has an exponential dichotomy 
with a projection $P$ that commutes with the fundamental matrix $\Phi_A(t).$
Assume that $f\in AA(\mathbb{R},V^{p})$. Then, 
the linear non-homogeneous equation \eqref{lineal+f} has
a unique $AA(\mathbb{R},V^{p})$ solution given by
$$y(t)= \int_\mathbb{R} G(t,s)f(s)ds,$$
satisfying \eqref{cota_solucion}.
\end{thm}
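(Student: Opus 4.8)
The plan is to assemble the ingredients already established in the matrix setting and to follow the same three-part scheme used in Theorem~\ref{teo:SoL_lineal+f_abstract} (exhibit the solution as $\Gamma f$, prove almost automorphicity, prove uniqueness), the only difference being that the $C_0$-group $T(t-s)$ of that theorem is now replaced by the evolution operator $\Psi_A(t,s)=\Phi_A(t)\Phi_A^{-1}(s)$ attached to the time-dependent almost automorphic matrix $A$. First I would set $y(t)=(\Gamma f)(t)=\int_{\mathbb{R}}G_A(t,s)f(s)\,ds$, with $G_A$ the Green matrix of \eqref{green}. Because $f\in AA(\mathbb{R},V^p)\subset BC(\mathbb{R},V^p)$ and the dichotomy yields $\|G_A(t,s)\|\le c\,e^{-\alpha|t-s|}$, the integral converges absolutely and $\|y(t)\|\le\|f\|_\infty\int_{\mathbb{R}}c\,e^{-\alpha|t-s|}\,ds=\tfrac{2c}{\alpha}\|f\|_\infty$, which is exactly the estimate \eqref{cota_solucion}.

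Next I would verify that this $y$ solves \eqref{lineal+f}. Splitting the integral and using that $P$ commutes with $\Phi_A$, write $y(t)=\Phi_A(t)P\int_{-\infty}^{t}\Phi_A^{-1}(s)f(s)\,ds-\Phi_A(t)(I-P)\int_{t}^{\infty}\Phi_A^{-1}(s)f(s)\,ds$; differentiating each term by means of $\Phi_A'=A\Phi_A$ together with the fundamental theorem of calculus produces $y'(t)=A(t)y(t)+[P+(I-P)]f(t)=A(t)y(t)+f(t)$, the boundary contributions combining into $f(t)$ precisely because the jump of $G_A$ across $t=s$ equals the identity. This is the only genuinely computational point, and it is routine once the commutation $P\Phi_A=\Phi_A P$ is in hand.

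The almost automorphicity of $y$ I would obtain directly from Lemma~\ref{bi_aa_integrable}-(ii): under the present hypotheses ($A$ almost automorphic, exponential dichotomy, projection commuting with $\Phi_A$) the space $AA(\mathbb{R},V^p)$ is invariant under $\Gamma$, so $f\in AA(\mathbb{R},V^p)$ forces $y=\Gamma f\in AA(\mathbb{R},V^p)$. This is where the real analytic weight lies, but it has already been discharged through the integral Bi-almost automorphicity of $G_A$ proved in that lemma, so here it reduces to a one-line citation.

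Finally, uniqueness follows as in part (c) of Theorem~\ref{teo:SoL_lineal+f_abstract}. If $y_1,y_2$ are bounded solutions of \eqref{lineal+f}, then $x=y_1-y_2$ is a bounded solution of the homogeneous equation \eqref{lineal}; writing $Px(t)=G_A(t,s)x(s)$ for $t\ge s$ gives $\|Px(t)\|\le c\,e^{-\alpha(t-s)}\|x\|_\infty\to 0$ as $s\to-\infty$, so $Px\equiv0$, and symmetrically $(I-P)x(t)=-G_A(t,s)x(s)$ for $t\le s$ forces $(I-P)x\equiv0$; hence $x\equiv0$. Thus the bounded, and in particular the almost automorphic, solution is unique. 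I expect the main obstacle to be organisational rather than analytic, namely confirming that the fundamental-matrix Green function $G_A$ plays exactly the role that $T(t-s)P$ played in the group case; no new estimate beyond Lemma~\ref{bi_aa_integrable} should be required.
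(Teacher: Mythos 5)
Your proposal is correct and takes essentially the same approach as the paper: the paper's entire proof of this theorem is the single line ``By application of Lemma~\ref{bi_aa_integrable}'', and your argument is precisely that citation fleshed out with the routine supporting steps (the dichotomy estimate giving \eqref{cota_solucion}, differentiation of the split Green-matrix integral to verify the equation, and the uniqueness argument borrowed from part (c) of Theorem~\ref{teo:SoL_lineal+f_abstract}). Your filled-in details, in particular the clean uniqueness argument using $Px(t)=G_A(t,s)x(s)$ and the commutation hypothesis, are all sound.
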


\begin{proof}
By application of Lemma~\ref{bi_aa_integrable}.
\end{proof}

\begin{thm}
Consider $V$ be a Hilbert space and $A$ a linear compact 
operator on $V$. Suppose that $V=\oplus_{k=1}^{\infty} V_{k}$ 
is a Hilbert sum such that $V_{k}$ is a finite dimensional 
subspace of $V$ for each $k\in \mathbb{N}$. Suppose that each
orthogonal projection $P_{k}$ on $V_{k}$ commutes with $A$.
If $f\in AA(\mathbb{R},V)$, then every bounded solution $y$ 
of \eqref{lineal+f} is belongs $AA(\mathbb{R},V)$.
\end{thm}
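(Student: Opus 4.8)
\emph{Plan.} The idea is to use the commutation relations $AP_k=P_kA$ to reduce the abstract equation to its finite-dimensional blocks, to solve each block with the Massera-type theorems already proved, and then to reassemble, the one subtle point being the \emph{uniform} convergence of the resulting series. First I would fix a bounded solution $y$ of \eqref{lineal+f} (with $A(t)\equiv A$ constant and compact) and project onto each $V_k$. Since $P_k$ commutes with $A$, the subspace $V_k$ is $A$-invariant and $y_k:=P_ky$ solves $y_k'=A_ky_k+f_k$ with $A_k:=A|_{V_k}$ and $f_k:=P_kf$. Almost automorphy is preserved by bounded linear maps, so $f_k\in AA(\mathbb{R},V_k)$, while $\|y_k(t)\|\le\|y\|_\infty$, so $y_k$ is a bounded solution of a finite-dimensional instance of \eqref{lineal+f}.

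Next I would show $y_k\in AA(\mathbb{R},V_k)$ for every $k$. Splitting the finite-dimensional space into the $A_k$-invariant subspaces $V_k=V_k^{h}\oplus V_k^{c}$ on which the spectrum of $A_k$ lies off, respectively on, the imaginary axis, the component on $V_k^{h}$ enjoys an exponential dichotomy, so after a Schur (upper-triangular) reduction Theorem~\ref{massera_matrix}-(i) yields that its unique bounded solution is almost automorphic; on $V_k^{c}$ all eigenvalues are purely imaginary, and here I would invoke the Bohl--Bohr property --- valid because $V$, hence the closed subspace $V_k^{c}$, is a Hilbert space (Theorem~\ref{1.1}) --- in the spirit of Theorem~\ref{massera_matrix}-(ii) and Theorem~\ref{teo:fin_vs_infin} to conclude that the bounded solution there is again almost automorphic. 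Hence $y_k\in AA(\mathbb{R},V_k)\subset AA(\mathbb{R},V)$.

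For the reassembly set $Q_N:=I-\sum_{k=1}^{N}P_k$, the orthogonal projection onto $\overline{\bigoplus_{k>N}V_k}$, which commutes with $A$ and satisfies $\|Q_N\|\le1$ and $Q_N\to0$ strongly. The partial sums $(I-Q_N)y=\sum_{k\le N}y_k$ are finite sums of almost automorphic functions, hence lie in $AA(\mathbb{R},V)$; since $AA(\mathbb{R},V)$ is closed in $(BC(\mathbb{R},V),\|\cdot\|_\infty)$ (uniform limits of almost automorphic functions are almost automorphic), it suffices to prove $\sup_t\|Q_Ny(t)\|\to0$. Two compactness facts feed this. First, an almost automorphic function has relatively compact range (for any sequence $s_n$, Definition~\ref{def_automorphic} applied to $\tilde\tau_n=s_n$ extracts a convergent subsequence of $f(s_n)$); since $Q_N\to0$ strongly with $\|Q_N\|\le1$, this convergence is uniform on the compact set $\overline{f(\mathbb{R})}$, so $\|Q_Nf\|_\infty\to0$. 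Second, compactness of $A$ together with $Q_N\to0$ strongly gives $\|AQ_N\|=\|Q_NA\|\to0$.

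The remaining and principal difficulty is to pass from the uniform smallness of the data $Q_Nf$ and of $AQ_N$ to the uniform smallness of the tail $z_N:=Q_Ny$ of the solution, equivalently to the relative compactness of $\overline{y(\mathbb{R})}$. Here $z_N$ solves $z_N'=A_Nz_N+Q_Nf$ on $Q_NV$ with $\|A_N\|=\|AQ_N\|\to0$, $\|Q_Nf\|_\infty\to0$ and $\|z_N\|_\infty\le\|y\|_\infty$. On the hyperbolic directions the variation-of-parameters representation of Theorem~\ref{teo:SoL_lineal+f_abstract} bounds $z_N$ in terms of $\|Q_Nf\|_\infty$, while on the central directions the boundedness of $y$ confines the flow to the semisimple, norm-preserving part, so that $\|y_k(t)\|\approx\|y_k(0)\|$ and $\sum_k\|y_k\|_\infty^2\le\|y\|_\infty^2$ stays summable; together these should give total boundedness of $y(\mathbb{R})$ and hence $\sup_t\|Q_Ny(t)\|\to0$. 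I expect this last uniformity --- reconciling the vanishing spectral gap of $A_N$ with a tail estimate uniform in $t$ --- to be the crux of the argument, and the only place where the compactness of $A$ is genuinely indispensable.
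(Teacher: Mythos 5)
Your first two steps (projecting onto the blocks $V_k$ via $AP_k=P_kA$, and getting $y_k\in AA(\mathbb{R},V_k)$ from the finite-dimensional Massera-type results) match the paper. The genuine gap is in your reassembly step, and it is exactly the point you yourself flag as unresolved. You try to prove $\sup_t\|Q_Ny(t)\|\to 0$, i.e.\ relative compactness of the range of the solution $y$ itself, but both of your mechanisms for this fail: (a) the inequality $\sum_k\|y_k\|_\infty^2\le\|y\|_\infty^2$ is false in general, because $\|y(t)\|^2=\sum_k\|y_k(t)\|^2$ holds at each fixed $t$, while the suprema $\|y_k\|_\infty$ may be attained at wildly different times, so the sum of the sups can be infinite even when $\|y\|_\infty=1$; (b) on the ``hyperbolic'' directions, the variation-of-parameters bound is of the form $\|z_N\|_\infty\le (2c_N/\alpha_N)\|Q_Nf\|_\infty$, and since $\|A Q_N\|\to 0$ the dichotomy exponent $\alpha_N$ degenerates to $0$ together with the data, so the quotient is not controlled. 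Relative compactness of $y(\mathbb{R})$ is essentially equivalent to the conclusion you are trying to prove, so attacking it head-on leaves you with the whole difficulty.

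The paper avoids this entirely by never asking for uniform convergence of $\sum_k y_k$. Instead it uses the compactness of $A$ at the only place it is really needed: since $y(\mathbb{R})$ is \emph{bounded} and $A$ is compact, the set $\{Ay(t):t\in\mathbb{R}\}$ is relatively compact, and since $Q_N\to 0$ strongly and hence uniformly on compact sets, $\sum_{k\le N}P_kAy(t)\to Ay(t)$ uniformly in $t$. Each $P_kAy=Ay_k$ is almost automorphic (because $y_k\in AA(\mathbb{R},V_k)$ by the finite-dimensional step), so $Ay$ is a uniform limit of almost automorphic functions, hence $Ay\in AA(\mathbb{R},V)$; then $y'=Ay+f\in AA(\mathbb{R},V)$, and finally Theorem~\ref{1.1} (the Bohl--Bohr property, valid since a Hilbert space does not contain $c_0$) applied to the bounded function $y$ with $y'\in AA(\mathbb{R},V)$ gives $y\in AA(\mathbb{R},V)$. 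In short: the compactness of $A$ buys relative compactness of $Ay(\mathbb{R})$, not of $y(\mathbb{R})$, and the passage from $y'\in AA$ to $y\in AA$ is delegated to the Bohl--Bohr theorem rather than to a tail estimate on the solution; this is the idea missing from your proposal.
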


\begin{proof}
Noticing  that for any $y \in V$, we have that
$y=\sum_{k=1}^{\infty}P_ky=\sum_{k=1}^{\infty}y_k$.
Then, by the fact that $A$ is bounded on $V$, we deduce that
\begin{eqnarray}
Ay=\sum_{k=1}^{\infty}Ay_{k}
=\sum_{k=1}^{\infty}AP_{k}y
=\sum_{k=1}^{\infty}P_{k}Ay.
\end{eqnarray}
Now, from the hypothesis that $f\in AA(\mathbb{R},V)$, we have that 
for any subsequence $\{ \tilde{\tau}_{n}\}_{n=1}^{\infty}\subset \mathbb{R}$, 
there exist a subsequence 
$\{ \tau_{n}\}_{n=1}^{\infty}\subset \{ \tilde{\tau}_{n}\}_{n=1}^{\infty}$ 
and a function $\tilde{f}$ such that $f_{\tau_{n}}(t)\to \tilde{f}(t)$
and $\tilde{f}_{-\tau_{n}}(t)\to f(t)$ pointwise on $\mathbb{R}$ when
$n\to \infty$. Then, by compactness of $A$ we deduce that
$Af_{\tau_{n}}(t)\to A\tilde{f}(t)$ and
$A\tilde{f}_{-\tau_{n}}(t)\to Af(t)$ pointwise on $\mathbb{R}$ when
$n\to \infty$.
Now, choosing $y_{k}(t)=P_{k}y(t)$ and assuming that
$y$ is solution of equation \eqref{lineal+f} we can
deduce that
\begin{eqnarray*}
y'_{k}=P_{k}y'  &=& P_{k}(Ay(t)+f(t))\\
				&=& AP_{k}y(t)+P_{k}f(t)\\
				&=& Ay_{k}(t)+P_{k}f(t)
\end{eqnarray*}
or equivalently
$y_{k}$ satisfies the equation \eqref{lineal+f} in the finite 
dimensional space $V_{k}$ with $P_{k}f(t)\in AA(\mathbb{R},V_{k})$
since $P_{k}$ is a bounded linear operator. Thus,
$y_{k}$ is bounded  if and only if 
$y_{k}\in AA(\mathbb{R},V_{k})$. 
Now, if $y(t)$ is bounded the set $\Big\{Ay(t)\vert t\in \mathbb{R}  \Big\}$ 
is relatively compact in $V$. Hence $\sum_{k=1}^{\infty}P_{k}Ay(t)=Ay(t)$ 
uniformly on $\mathbb{R}$.

On the other hand $P_{k}Ay(t)\in AA(\mathbb{R},V_{k})$
since $P_kAy(t)=AP_ky(t)=Ay_{k}(t)$. 
Then, $Ay(t)\in AA(\mathbb{R},V)$ and $y'(t)\in AA(\mathbb{R},V)$
since $y(t)$ satisfies the equation \eqref{lineal+f}. Therefore,  
using the Theorem~\ref{1.1}, $y\in AA(\mathbb{R},V)$ 
since $y\in BC(\mathbb{R},V)$ and $V$ is a Hilbert space.
\end{proof}

\subsection{Results for \eqref{lineal+f+g}}
\label{subsec:lineal+f+g}
Before start we recall the notation $\Delta(\varphi_0,\rho)$ and 
$\varphi_0 $ given on \eqref{eq:condition_L}. Here,
in this subsection, we present two results for \eqref{lineal+f+g}
assuming fundamentally that $g$  satisfies the assumptions given on
 \eqref{eq:condition_L} and  $f$ if a function  such that the inequality
\begin{eqnarray}
\Vert f \Vert \leq \frac{\alpha \rho}{2c},
\label{condition_f}
\end{eqnarray}
is satisfied for some positive constants $\alpha$ and $c$,
then $0\in \Delta(\varphi_0,\rho)$ or equivalently $\Vert \varphi_0 \Vert \leq \rho$.

\begin{thm}\label{theo_sol_unica}
Consider $A(t)\in AA(\mathbb{R},\mathbb{C}^{p\times p})$ such that 
\eqref{lineal} has an $\alpha$-exponential dichotomy with a projection 
$P$ that commutes with $\Phi_A$. Assume that $g$ satisfies the assumptions
given on \eqref{eq:condition_L} and $f$ is selected such that the
\eqref{condition_f} holds. 
Then, if $4cL<\alpha$ the equation \eqref{lineal+f+g}
 has a unique  solution belongs $AA(\mathbb{R},V^p)$.
\end{thm}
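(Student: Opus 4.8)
The plan is to recast \eqref{lineal+f+g} as a fixed-point equation and apply the Banach contraction principle on a closed ball of almost automorphic functions. First I would set $\varphi_0(t)=\int_{\mathbb{R}}G(t,s)f(s)\,ds$, which by Theorem~\ref{cor_unica_solucion} is the unique $AA(\mathbb{R},V^p)$ solution of the linear equation \eqref{lineal+f}, and introduce the operator
$$
(\mathcal{T}y)(t)=\int_{\mathbb{R}}G(t,s)\big[f(s)+g(s,y(s))\big]\,ds
=\varphi_0(t)+\int_{\mathbb{R}}G(t,s)\,g(s,y(s))\,ds
$$
on the complete metric space $\mathcal{B}=\{y\in AA(\mathbb{R},V^p):\|y-\varphi_0\|_{\infty}\le\rho\}$, a closed subset of $AA(\mathbb{R},V^p)$. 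Treating $h(t)=f(t)+g(t,y(t))$ as a forcing term, the identity $y=\mathcal{T}y$ says exactly that $y$ is the unique $AA$ solution of $y'=A(t)y+h(t)$, so fixed points of $\mathcal{T}$ coincide with the $AA$ solutions of \eqref{lineal+f+g} lying in $\mathcal{B}$.

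Second, I would verify that $\mathcal{T}$ maps $\mathcal{B}$ into itself. For almost automorphy, every $y\in\mathcal{B}$ takes values in $\Delta(\varphi_0,\rho)$, so Proposition~\ref{composition} gives $g(\cdot,y(\cdot))\in AA(\mathbb{R},V^p)$, and the invariance of $AA(\mathbb{R},V^p)$ under the Green operator (Lemma~\ref{bi_aa_integrable}-(ii)) yields $\mathcal{T}y\in AA(\mathbb{R},V^p)$. For the ball constraint, hypothesis \eqref{condition_f} forces $\|\varphi_0\|_{\infty}\le\rho$, hence $0\in\Delta(\varphi_0,\rho)$ and $\|y\|_{\infty}\le 2\rho$; combining $(g_0)$, $(g_2)$ and the dichotomy bound $\|G(t,s)\|\le ce^{-\alpha|t-s|}$ of \eqref{green}, I would estimate
$$
\|(\mathcal{T}y)(t)-\varphi_0(t)\|
\le\int_{\mathbb{R}}ce^{-\alpha|t-s|}\,L\,\|y(s)\|\,ds
\le\frac{4cL\rho}{\alpha}<\rho,
$$
the last inequality being precisely where $4cL<\alpha$ enters.

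Third comes the contraction estimate: for $y_1,y_2\in\mathcal{B}$ the same bounds give
$$
\|(\mathcal{T}y_1)(t)-(\mathcal{T}y_2)(t)\|
\le\int_{\mathbb{R}}ce^{-\alpha|t-s|}\,L\,\|y_1(s)-y_2(s)\|\,ds
\le\frac{2cL}{\alpha}\,\|y_1-y_2\|_{\infty},
$$
and since $4cL<\alpha$ the factor $2cL/\alpha$ is below $1/2$, so $\mathcal{T}$ is a contraction. The Banach fixed-point theorem then yields a unique fixed point $y\in\mathcal{B}$, which is the asserted unique almost automorphic solution of \eqref{lineal+f+g}.

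I expect the only genuinely substantive step to be the almost-automorphy half of the self-mapping property, which rests on two imported results---the composition theorem (Proposition~\ref{composition}) applied to $g(\cdot,y(\cdot))$ and the $AA$-invariance of the Green operator (Lemma~\ref{bi_aa_integrable})---rather than on the norm estimates, which become routine once \eqref{condition_f} is used to place $0$ inside $\Delta(\varphi_0,\rho)$. The point to watch is that Proposition~\ref{composition} requires $g$ to be almost automorphic in $t$, uniformly for $y$ in compact sets; I would state this as an explicit standing hypothesis alongside \eqref{eq:condition_L}, after which the argument closes without further difficulty.
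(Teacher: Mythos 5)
Your proposal follows essentially the same route as the paper's own proof: the identical Green-operator fixed-point map (the paper's $\Gamma$) on the identical ball $\Delta=AA(\mathbb{R},V^p)\cap\Delta(\varphi_0,\rho)$, with Proposition~\ref{composition} and Theorem~\ref{cor_unica_solucion} (via Lemma~\ref{bi_aa_integrable}) supplying almost automorphy, and the same self-mapping and contraction estimates feeding into the Banach fixed point theorem. Your closing caveat is well taken: the theorem's hypotheses list only \eqref{eq:condition_L}, which does not assert that $g$ is almost automorphic in $t$ uniformly on compacts, yet both your argument and the paper's invoke Proposition~\ref{composition}, so this should indeed be added as an explicit standing hypothesis.
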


\begin{proof}
Let $\Delta=AA(\mathbb{R},V^p)\cap \Delta(\varphi_0,\rho)$ 
and $G=G(t,s)$ the Green matrix associated to $\alpha$-exponential dichotomy, i.e
\begin{eqnarray*}
 \Vert G(t,s)\Vert \leq ce^{-\alpha\vert t-s \vert}
 \quad
 \mbox{for some $c,\alpha\in\mathbb{R}^+$ and for all $(t,s)\in \mathbb{R}^2$.}
\end{eqnarray*}
Now, for $\varphi\in \Delta$,
by Proposition \ref{composition} we follow that the function 
$g(t,\varphi(t))\in AA(\mathbb{R},V^p)$. 
Then, by Theorem~\ref{cor_unica_solucion}, we have that
\begin{eqnarray}
(\Gamma\varphi)(t)=\int_{\mathbb{R}} G(t,s)[f(s)+g(s,\varphi(s))]ds\label{operador}
\end{eqnarray}
is the unique $AA(\mathbb{R},V^p)$ 
solution of the  system \eqref{lineal+f+g}. Moreover, $(\Gamma\varphi)(t)$
satisfies the inequality
\begin{eqnarray*}
\Vert \Gamma\varphi -\varphi_{0} \Vert \leq \frac{2cL}{\alpha} 
\Vert \varphi \Vert \leq \frac{2cL}{\alpha} 
\left( \Vert \varphi -\varphi_0 \Vert +\Vert \varphi_0 \Vert
\right)\leq \frac{4cL}{\alpha}\rho \leq \rho .
\end{eqnarray*}
Thus, $\Delta$ is invariant under $\Gamma$. Furthermore, we note note
that $\Gamma$ is a contraction, since
\begin{eqnarray*}
\vert(\Gamma\varphi_{1})(t)-(\Gamma\varphi_2)(t)\vert 
&\leq& L\int_{\mathbb{R}} \vert G(t,s)\vert \vert  
\varphi_1(s)-\varphi_2(s)\vert ds \\
&\leq& \frac{2cL}{\alpha} \Vert \varphi_1 -\varphi_2 \Vert.
\end{eqnarray*}
Hence by the Banach fixed point arguments, 
we deduce that
$\Gamma$ has a unique fixed point $\varphi \in \Delta$. 
Consequently, 
the function $\varphi$ is the unique solution of \eqref{lineal+f+g}.
\end{proof}

\begin{thm}\label{unique_sol_lineal+f+g}
Consider that $A$ is a linear operator satisfying the
assumptions given on Theorem~\ref{teo:SoL_lineal+f_abstract}.
Assume that $g$ satisfies the assumptions
given on \eqref{eq:condition_L} and $f$ is selected such that the
\eqref{condition_f} holds. 
Then, if $4cL<\alpha$ the equation \eqref{lineal+f+g}
 has a unique mild  solution belongs $AA(\mathbb{R},V^p)$.
\end{thm}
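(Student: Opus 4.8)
The plan is to repeat, almost verbatim, the contraction-mapping argument already carried out for Theorem~\ref{theo_sol_unica}, the only change being that the underlying linear theory—previously Theorem~\ref{cor_unica_solucion} for the matrix coefficient $A(t)$—is now supplied by its abstract counterpart Theorem~\ref{teo:SoL_lineal+f_abstract}, valid for the $C_0$-group generator $A$. First I would set $\Delta=AA(\mathbb{R},V^{p})\cap\Delta(\varphi_0,\rho)$, which is a complete metric space under the sup norm (a closed subset of the Banach space $AA(\mathbb{R},V^{p})$), and recall that by \eqref{green_c0} and \eqref{green_c0_exp} the Green function obeys $\|G(t,s)\|\le c\,e^{-\alpha|t-s|}$ for all $(t,s)\in\mathbb{R}^2$.

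For a fixed $\varphi\in\Delta$ one has $\varphi\in AA(\mathbb{R},\Delta(\varphi_0,\rho))$, so the composition result Proposition~\ref{composition} gives $g(\cdot,\varphi(\cdot))\in AA(\mathbb{R},V^{p})$, hence $f+g(\cdot,\varphi(\cdot))\in AA(\mathbb{R},V^{p})$. Applying Theorem~\ref{teo:SoL_lineal+f_abstract} with this inhomogeneity shows that the operator $\Gamma$ from \eqref{operador}, namely $(\Gamma\varphi)(t)=\int_{\mathbb{R}}G(t,s)[f(s)+g(s,\varphi(s))]\,ds$, is well defined, produces the unique mild $AA(\mathbb{R},V^{p})$ solution of the linear equation with the nonlinearity frozen at $\varphi$, and satisfies the estimate \eqref{cota_solucion}.

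Next I would check that $\Gamma$ maps $\Delta$ into itself and is a contraction, exactly as in Theorem~\ref{theo_sol_unica}. Since $\varphi_0(t)=\int_{\mathbb{R}}G(t,s)f(s)\,ds$ is $\Gamma$ evaluated at the zero nonlinearity, subtraction gives $(\Gamma\varphi-\varphi_0)(t)=\int_{\mathbb{R}}G(t,s)\,g(s,\varphi(s))\,ds$; using $(g_0)$ and $(g_2)$ in the form $\|g(s,\varphi(s))\|=\|g(s,\varphi(s))-g(s,0)\|\le L\|\varphi(s)\|$, together with \eqref{cota_solucion}, \eqref{condition_f} and $\|\varphi\|_\infty\le\|\varphi-\varphi_0\|_\infty+\|\varphi_0\|_\infty\le 2\rho$, I obtain $\|\Gamma\varphi-\varphi_0\|_\infty\le \tfrac{2cL}{\alpha}\|\varphi\|_\infty\le\tfrac{4cL}{\alpha}\rho\le\rho$, so $\Gamma(\Delta)\subset\Delta$. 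Likewise $\|\Gamma\varphi_1-\Gamma\varphi_2\|_\infty\le \tfrac{2cL}{\alpha}\|\varphi_1-\varphi_2\|_\infty$, and the hypothesis $4cL<\alpha$ forces $\tfrac{2cL}{\alpha}<1$, so $\Gamma$ is a contraction; the Banach fixed point theorem then yields a unique $\varphi^{*}\in\Delta$ with $\Gamma\varphi^{*}=\varphi^{*}$, which is the desired unique mild solution of \eqref{lineal+f+g} in $AA(\mathbb{R},V^{p})$.

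The only genuinely new point compared with Theorem~\ref{theo_sol_unica}—and the step I expect to demand the most care—is the interpretation of $\int_{\mathbb{R}}G(t,s)h(s)\,ds$ as a \emph{mild} solution in the operator setting, and the verification that a fixed point of $\Gamma$ is indeed a mild solution of the full nonlinear problem rather than merely a fixed point of an integral operator. This is already handled by Theorem~\ref{teo:SoL_lineal+f_abstract}, whose construction both reproduces the variation-of-parameters representation for the $C_0$-group and supplies the bound \eqref{cota_solucion}; once these are invoked, the nonlinear argument is formally identical to the finite-dimensional case, with Proposition~\ref{composition} being exactly what keeps the Picard iteration inside $AA(\mathbb{R},V^{p})$.
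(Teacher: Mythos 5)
Your proof is correct and follows essentially the same route as the paper: Proposition~\ref{composition} plus the abstract linear result of Theorem~\ref{teo:SoL_lineal+f_abstract} to define the Green operator on $\Delta=AA(\mathbb{R},V^{p})\cap\Delta(\varphi_0,\rho)$, then the invariance estimate $\|\Gamma\varphi-\varphi_0\|_\infty\le\frac{4cL}{\alpha}\rho\le\rho$, the contraction bound $\frac{2cL}{\alpha}<1$, and the Banach fixed point theorem. If anything your write-up is tidier than the paper's, which writes the operator as $\varphi_0(t)+\int_{\mathbb{R}}G(t,s)\,g(s,\varphi_{-\tau}(s))\,ds$ with a stray delay translation $\varphi_{-\tau}$ apparently carried over from the delayed case; this coincides with your operator once one notes $\varphi_0=\int_{\mathbb{R}}G(\cdot,s)f(s)\,ds$ and that no delay appears in \eqref{lineal+f+g}.
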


\begin{proof}
Let $\Delta = AA(\mathbb{R},V)\cap \Delta(\varphi_0,\rho)$ and
$\varphi\in AA(\mathbb{R},V)$. Then
$\varphi_{-\tau}\in AA(\mathbb{R},V)$. 
Now, by the composition Proposition~\ref{composition},
we have that $\psi_{\varphi}(\cdot)=g(\cdot,\varphi_{-\tau}(\cdot))\in AA(\mathbb{R},V)$.
Moreover, applying
similar arguments to that used in Theorem~\eqref{theo_sol_unica}, 
we deduce that the Green operator
$$(\Gamma\varphi)(t)=\varphi_0(t)+\int_{\mathbb{R}}G(t,s)\psi_{\varphi}(s)ds$$
maps $\Delta$ into $\Delta$ and additionally we can prove that
$\Gamma :\Delta \rightarrow \Delta$ is a strict contraction. 
Thus, the Banach principle insures the existence of 
a unique $\varphi\in \Delta$ satisfying \eqref{condition_f}.
The proof is now complete.
\end{proof}

\subsection{Results for \eqref{delay_lineal+f+g}}
\label{subsec:delay_lineal+f+g}
In this subsection we generalize the Theorems~\ref{theo_sol_unica}
and \ref{unique_sol_lineal+f+g} to the case of the delay equation
\eqref{delay_lineal+f+g}.

\begin{thm}
\label{teo:delay_lineal+f+g}
Consider that $\tau>0$ is a constant delay. Then, we 
have that the following assertions are valid:
\begin{enumerate} [(i)]
 \item If $A,f$ and $g$ satisfy the hypothesis of Theorem~\ref{theo_sol_unica}. Then,
 the conclusions of Theorem~\ref{theo_sol_unica} holds for 
 the delayed equation \eqref{delay_lineal+f+g}.
 \item If $A,f$ and $g$ satisfy the hypothesis of Theorem~\ref{unique_sol_lineal+f+g}. 
 Then, the conclusions of Theorem~\ref{unique_sol_lineal+f+g} are true for the delayed 
 equation~\ref{delay_lineal+f+g}
\end{enumerate}
\end{thm}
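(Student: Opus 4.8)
The plan is to prove both items by the same Banach fixed point scheme used for Theorems~\ref{theo_sol_unica} and~\ref{unique_sol_lineal+f+g}, the only new ingredient being the $\tau$-translation $\varphi\mapsto\varphi_{-\tau}$ with $\varphi_{-\tau}(t)=\varphi(t-\tau)$. The two facts that make the reduction work are that this translation maps $AA(\mathbb{R},V^{p})$ into itself (almost automorphy is preserved under translation, directly from Definition~\ref{def_automorphic}) and that it is an isometry for the sup norm, i.e.\ $\|\varphi_{-\tau}\|_{\infty}=\|\varphi\|_{\infty}$. Because of these, every estimate in the proofs of Theorems~\ref{theo_sol_unica} and~\ref{unique_sol_lineal+f+g} transfers once $g(s,\varphi(s))$ is replaced by $g(s,\varphi(s-\tau))$.

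For item (i), I would set $\Delta=AA(\mathbb{R},V^{p})\cap\Delta(\varphi_{0},\rho)$ and, for $\varphi\in\Delta$, define the delayed Green operator
\begin{eqnarray*}
(\Gamma\varphi)(t)=\int_{\mathbb{R}}G(t,s)\bigl[f(s)+g(s,\varphi(s-\tau))\bigr]\,ds.
\end{eqnarray*}
Since $\varphi\in AA(\mathbb{R},V^{p})$ gives $\varphi_{-\tau}\in AA(\mathbb{R},V^{p})$, Proposition~\ref{composition} yields $g(\cdot,\varphi(\cdot-\tau))\in AA(\mathbb{R},V^{p})$, so that Theorem~\ref{cor_unica_solucion} identifies $\Gamma\varphi$ as the unique almost automorphic solution of the associated linear problem. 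The self-map property then follows from the chain
\begin{eqnarray*}
\|\Gamma\varphi-\varphi_{0}\|\leq\frac{2cL}{\alpha}\|\varphi_{-\tau}\|=\frac{2cL}{\alpha}\|\varphi\|\leq\frac{2cL}{\alpha}\bigl(\|\varphi-\varphi_{0}\|+\|\varphi_{0}\|\bigr)\leq\frac{4cL}{\alpha}\rho\leq\rho,
\end{eqnarray*}
and the contraction estimate reads
\begin{eqnarray*}
\vert(\Gamma\varphi_{1})(t)-(\Gamma\varphi_{2})(t)\vert\leq L\int_{\mathbb{R}}\vert G(t,s)\vert\,\vert\varphi_{1}(s-\tau)-\varphi_{2}(s-\tau)\vert\,ds\leq\frac{2cL}{\alpha}\|\varphi_{1}-\varphi_{2}\|,
\end{eqnarray*}
where the last step again uses $\|(\varphi_{1})_{-\tau}-(\varphi_{2})_{-\tau}\|_{\infty}=\|\varphi_{1}-\varphi_{2}\|_{\infty}$. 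Under $4cL<\alpha$, the Banach fixed point theorem supplies the unique $\varphi\in\Delta$ solving~\eqref{delay_lineal+f+g}.

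Item (ii) is entirely analogous: one replaces the finite-dimensional Green matrix and Theorem~\ref{cor_unica_solucion} by the $C_{0}$-group Green function~\eqref{green_c0} and the abstract result Theorem~\ref{teo:SoL_lineal+f_abstract}, exactly as in Theorem~\ref{unique_sol_lineal+f+g}. In fact the proof of that theorem already operates with $\varphi_{-\tau}$, so (ii) needs essentially nothing beyond observing that its forcing term is the delayed one and that the contraction constant is unchanged because the delay acts by a sup-norm isometry.

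The step I expect to be the main obstacle is the domain bookkeeping required by Proposition~\ref{composition} and hypothesis $(g_{2})$ of~\eqref{eq:condition_L}: these demand that the argument of $g$ lie in the ball $\Delta(\varphi_{0},\rho)$, whereas the translate $\varphi_{-\tau}$ is compared against the \emph{untranslated} centre $\varphi_{0}$, so in general $\|\varphi_{-\tau}-\varphi_{0}\|_{\infty}\neq\|\varphi-\varphi_{0}\|_{\infty}$. I would circumvent this by invoking $(g_{0})$ to write $g(s,\varphi(s-\tau))=g(s,\varphi(s-\tau))-g(s,0)$ and bounding it through $\|\varphi_{-\tau}\|_{\infty}=\|\varphi\|_{\infty}$, which is precisely what the self-map and contraction estimates above require; the apparent mismatch with the ball therefore does not affect the fixed point argument. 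Apart from this bookkeeping, the delay introduces no genuinely new analytic difficulty.
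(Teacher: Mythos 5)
Your proposal is correct and takes essentially the same approach as the paper: the paper's own proof is exactly this reduction to Theorems~\ref{theo_sol_unica} and~\ref{unique_sol_lineal+f+g}, resting on the observation that the translation $\varphi\mapsto\varphi_{-\tau}$ preserves almost automorphy (and the sup norm) so that Proposition~\ref{composition} applies to $g(\cdot,\varphi(\cdot-\tau))$; you merely spell out the fixed-point details that the paper delegates to the earlier theorems. The ball-membership subtlety you flag (that $\varphi_{-\tau}$ need not lie in $\Delta(\varphi_0,\rho)$) is genuine, but it is equally present in, and silently ignored by, the paper's own two-sentence proof, so your treatment is, if anything, the more careful of the two.
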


\begin{proof}
The proof follows by application of Theorems \ref{theo_sol_unica} and
\ref{unique_sol_lineal+f+g} since by 
Proposition~\ref{composition} we have that 
$g(\cdot,\phi(\cdot -\tau))\in AA(\mathbb{R},V)$
for every $\phi\in AA(\mathbb{R},\Delta (\varphi_0,\rho))$.
Indeed, this fact is a consequence of the following fact:
 $\phi\in AA(\mathbb{R},V)$ implies  that the 
translation $\phi_{\tau}(\cdot)=\phi(\cdot-\tau)$ is belongs $ AA(\mathbb{R},V)$.
\end{proof}

\section{Application to the Delayed Lasota-Wazewska Model}
\label{sec:appl}

The Lasota-Wazewska model is an autonomous differential equation of the form
\begin{eqnarray}
y'(t)=-\delta y(t)+pe^{-\gamma y(t-\tau )},\ t\geq 0. \label{lasota-wazewska}
\end{eqnarray}
It was occupied by Wazewska-Czyzewska and 
Lasota \cite{lasota-wazewska} to describe the 
survival of red blood cells in the blood of an
animal. In this equation, $y(t)$ describes the
number of red cells bloods in the time $t,\delta >0$ 
is the probability of death of a red blood cell; $p,\gamma$ 
are positive constants related with the production of 
red blood cells by unity of time and $\tau$ is the time
required to produce a red blood cell.

In this section, we study the following delayed model:
\begin{eqnarray}
y'(t)=-\delta(t) y(t)+p(t)g(y(t-\tau)), \label{appli-lasota}
\end{eqnarray}
where $\tau>0$, $\delta(\cdot),\ p(\cdot)$ are positive 
almost automorphic functions and $g(\cdot)$ is a positive 
Lipschitz function with Lipschitz constant $\gamma$.
Equation \eqref{appli-lasota} models several situations 
in the real life, see \cite{liz-martinez}.\\
We will assume the following condition
\begin{enumerate}
\item[(D)] The mean of $\delta$ satisfies $M(\delta)>\delta_{-}>0$. 
\end{enumerate}
In this section, the principal goal is the following Theorem:

\begin{thm}\label{gamma_small_unique_sol}
In the above conditions, for $\gamma$ sufficiently small, 
the equation \eqref{lasota-wazewska} has a unique almost automorphic solution.
\end{thm}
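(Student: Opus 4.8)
The plan is to recast the delayed model \eqref{appli-lasota} as an instance of the abstract delayed equation \eqref{delay_lineal+f+g} and then to invoke Theorem~\ref{teo:delay_lineal+f+g}-{\it (i)}. The only genuine difficulty is that the natural nonlinearity $p(t)g(y)$ does not vanish at $y=0$, so hypothesis $(g_0)$ of \eqref{eq:condition_L} fails for it directly; I would remove this obstruction by an additive splitting. Writing $g(0)$ for the value of the Lipschitz function at the origin, I set
\begin{eqnarray*}
A(t)=-\delta(t),\qquad f(t)=g(0)\,p(t),\qquad \tilde g(t,u)=p(t)\big(g(u)-g(0)\big),
\end{eqnarray*}
so that \eqref{appli-lasota} becomes exactly $y'=A(t)y+f(t)+\tilde g(t,y(t-\tau))$, and now $\tilde g(t,0)=0$, recovering $(g_0)$.

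With this choice I would verify the hypotheses of Theorem~\ref{theo_sol_unica} one by one. Since $\delta$ is almost automorphic, $A=-\delta\in AA(\mathbb{R},\mathbb{C})$; because $\delta$ and $p$ are almost automorphic and $g(0)$ is a constant, $f=g(0)\,p\in AA(\mathbb{R},\mathbb{C})$. Condition (D) gives $\mathrm{Re}\big(M(A)\big)=-M(\delta)<-\delta_-<0$, so Lemma~\ref{lema_mean}-{\it (i)} applies to $\mu=-\delta$: choosing any $\alpha\in]0,\delta_-[$ produces constants $c,\alpha>0$ with $\big|\exp(\int_s^t(-\delta))\big|\le c\,e^{-\alpha(t-s)}$ for $t\ge s$. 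As the problem is scalar, the projection is $P=1$, which trivially commutes with the fundamental solution, and the Green function \eqref{green} reduces to $G(t,s)=\exp(-\int_s^t\delta)$ for $t\ge s$ and $G(t,s)=0$ otherwise; hence \eqref{lineal} enjoys an $\alpha$-exponential dichotomy. For $\tilde g$, continuity is clear and the estimate $|\tilde g(t,u_1)-\tilde g(t,u_2)|\le \|p\|_\infty\,\gamma\,|u_1-u_2|$ shows that $(g_2)$ holds with $L=\gamma\|p\|_\infty$.

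Finally I would fix the free radius $\rho>0$ large enough that $\|f\|_\infty=|g(0)|\,\|p\|_\infty\le \alpha\rho/(2c)$, which is precisely \eqref{condition_f}; the reference function $\varphi_0(t)=\int_{\mathbb{R}}G(t,s)f(s)\,ds$ is then well defined, bounded, and almost automorphic by Theorem~\ref{cor_unica_solucion}. The smallness requirement $4cL<\alpha$ of Theorem~\ref{theo_sol_unica} reads $4c\gamma\|p\|_\infty<\alpha$, and since $c$ and $\alpha$ depend only on $\delta$, this holds for every $\gamma$ below the explicit threshold $\gamma_0=\alpha/(4c\|p\|_\infty)$, which is the meaning of ``$\gamma$ sufficiently small''. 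Once all hypotheses are checked, Theorem~\ref{teo:delay_lineal+f+g}-{\it (i)} yields a unique almost automorphic solution of \eqref{appli-lasota}, as claimed. The step requiring the most care is confirming that the shifted nonlinearity $\tilde g$ is jointly almost automorphic, uniformly for $u$ in compact sets, so that the composition Proposition~\ref{composition} applies to $\tilde g(\cdot,\varphi(\cdot-\tau))$; this follows from the almost automorphy of $p$ together with the global Lipschitz continuity of $g$, and it is the point I would spell out in full.
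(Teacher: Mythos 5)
Your proposal is correct and takes essentially the same route as the paper: an $\alpha$-exponential dichotomy for the scalar linear part obtained from Lemma~\ref{lema_mean} under condition (D), followed by the contraction result Theorem~\ref{theo_sol_unica} (through its delayed form, Theorem~\ref{teo:delay_lineal+f+g}). If anything, you are more careful than the paper itself: the paper invokes Theorem~\ref{theo_sol_unica} for the nonlinearity $p(t)g(y)$ without addressing that $(g_0)$ in \eqref{eq:condition_L} fails for it (since $g$ is positive, $p(t)g(0)\neq 0$), whereas your splitting $f(t)=g(0)\,p(t)$, $\tilde g(t,u)=p(t)\bigl(g(u)-g(0)\bigr)$ repairs exactly that point, and your threshold $\gamma_0=\alpha/(4c\Vert p\Vert_\infty)$ makes ``$\gamma$ sufficiently small'' explicit.
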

By Lemma \ref{lema_mean}, the linear part of 
equation \eqref{lasota-wazewska} has an exponential dichotomy.
Let $\psi(t)$ be a real almost automorphic 
function and consider the equation
\begin{eqnarray}
y'(t)=-\delta (t)y(t)+p(t)g(\psi(t-\tau)). \label{lasota-psi}
\end{eqnarray}

Then, the bounded solution for the equation \eqref{lasota-psi} satisfies
\begin{eqnarray*}
y(t)=\int_{-\infty}^{t} \exp \left( -\int_{u}^{t}\delta (s)ds  \right)p(u)g(\psi(u-\tau))du.
\end{eqnarray*}

The homogeneous part of equation of \eqref{lasota-psi} 
has an exponential dichotomy and since $\delta$ is almost
automorphic function, by Lemma \ref{bi_aa}, it is integrally
Bi-almost automorphic. Therefore, Theorem \ref{gamma_small_unique_sol} 
follows from Theorem \ref{theo_sol_unica}.

Taking $g(x)=e^{ -\gamma x},\ \alpha>0$, we have the Lasota-Wazewska model:
\begin{eqnarray}
y'(t)=-\delta(t) y(t)+p(t)e^{-\gamma y(t-\tau)},\ t\geq 0. \label{application}
\end{eqnarray}


\begin{cor}
For $\gamma$ small enough, the delayed Lasota-Wazewska model \eqref{application}
has a unique asymptotically stable almost automorphic solution.
\end{cor}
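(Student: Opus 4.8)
The plan is to split the statement into its two claims: existence and uniqueness of an almost automorphic solution, and its asymptotic stability. For the first, I would observe that \eqref{application} is precisely \eqref{appli-lasota} with the nonlinearity $g(x)=e^{-\gamma x}$, and that on the nonnegative reals this $g$ is Lipschitz with constant $\gamma$, since $|g'(x)|=\gamma e^{-\gamma x}\le\gamma$ for $x\ge 0$. Hence the effective Lipschitz constant of $(t,y)\mapsto p(t)g(y)$ is $L=\gamma\,\|p\|_\infty$. Combining this with condition (D), which through Lemma~\ref{lema_mean} supplies the $\alpha$-exponential dichotomy of the linear part for any $\alpha$ below $M(\delta)$, I would invoke Theorem~\ref{gamma_small_unique_sol} (equivalently Theorem~\ref{theo_sol_unica}): for $\gamma$ small enough that $4cL<\alpha$, equation \eqref{application} has a unique almost automorphic solution $y^\ast$. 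I would also record that trajectories issued from nonnegative initial histories stay nonnegative, because $y(t)=0$ forces $y'(t)=p(t)e^{-\gamma y(t-\tau)}>0$; this keeps every relevant solution in the domain where the bound $|g'|\le\gamma$ is valid.

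For asymptotic stability, let $y$ be any nonnegative bounded solution and set $z=y-y^\ast$. Subtracting the two copies of \eqref{application} and applying the mean value theorem to $x\mapsto e^{-\gamma x}$, I would obtain
\begin{eqnarray*}
z'(t)=-\delta(t)z(t)-\gamma\,p(t)\,e^{-\gamma\xi(t)}\,z(t-\tau),
\end{eqnarray*}
with $\xi(t)$ lying between $y(t-\tau)$ and $y^\ast(t-\tau)$, so that $\xi(t)\ge 0$ and $e^{-\gamma\xi(t)}\le 1$. Variation of parameters for the scalar linear part, together with the dichotomy estimate $|\exp(-\int_s^t\delta)|\le c\,e^{-\alpha(t-s)}$ from Lemma~\ref{lema_mean}, then yields
\begin{eqnarray*}
|z(t)|\le c\,e^{-\alpha(t-t_0)}|z(t_0)|
+\gamma\,\|p\|_\infty\,c\int_{t_0}^{t} e^{-\alpha(t-s)}\,|z(s-\tau)|\,ds .
\end{eqnarray*}

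The decisive step is a $\limsup$ argument. Writing $\sigma=\limsup_{t\to\infty}|z(t)|$, which is finite by boundedness, the first term vanishes as $t\to\infty$, while splitting the integral at a time past which $|z(s)|\le\sigma+\epsilon$ and estimating $\int e^{-\alpha(t-s)}\,ds\le 1/\alpha$ gives $\sigma\le(\gamma\,\|p\|_\infty\,c/\alpha)\,\sigma=(cL/\alpha)\,\sigma$. Since the smallness hypothesis $4cL<\alpha$ already forces $cL/\alpha<1$, I conclude $\sigma=0$, that is $|y(t)-y^\ast(t)|\to 0$, which is the asserted asymptotic stability. I expect the main obstacle to be the bookkeeping in this delayed estimate, namely handling the $\tau$-shift in $|z(s-\tau)|$ and the contribution of the initial history on $[t_0-\tau,t_0]$, together with confirming a priori that trajectories remain nonnegative and bounded so that both the uniform Lipschitz bound $e^{-\gamma\xi}\le 1$ and the dichotomy estimate apply throughout.
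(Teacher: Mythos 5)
Your proposal is correct in substance, but it is worth noting that it does strictly more than the paper does: the paper offers no proof of this corollary at all. The paper's (implicit) argument is exactly your first half --- specialize Theorem~\ref{gamma_small_unique_sol} (itself a consequence of Lemma~\ref{lema_mean}, Lemma~\ref{bi_aa} and Theorem~\ref{theo_sol_unica}) to $g(x)=e^{-\gamma x}$, whose Lipschitz constant on $[0,\infty)$ is $\gamma$, so that $4c\gamma\|p\|_\infty<\alpha$ holds for $\gamma$ small --- and the asymptotic stability claim is simply asserted, never argued. Your second half therefore supplies a missing proof rather than an alternative one: the variation-of-parameters estimate $|z(t)|\le c\,e^{-\alpha(t-t_0)}|z(t_0)|+c\gamma\|p\|_\infty\int_{t_0}^{t}e^{-\alpha(t-s)}|z(s-\tau)|\,ds$ followed by the $\limsup$ (Halanay-type) argument giving $\sigma\le(cL/\alpha)\sigma$ with $cL/\alpha<1$ is sound and is the standard way to get attractivity here. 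Three loose ends you flagged can be closed cheaply: positivity of $y^\ast$ (needed so that $\xi(t)\ge 0$ and $e^{-\gamma\xi(t)}\le 1$) follows from the fixed-point representation $y^\ast(t)=\int_{-\infty}^{t}\exp\bigl(-\int_{u}^{t}\delta(s)ds\bigr)p(u)e^{-\gamma y^\ast(u-\tau)}du>0$; a priori boundedness of nonnegative solutions follows from the same dichotomy estimate, $y(t)\le c\,y(t_0)+c\|p\|_\infty/\alpha$; and strictly speaking your argument proves attractivity, so a one-line Gronwall-type remark is needed if one insists on Lyapunov stability as well. One defect your route inherits from the paper (so it is not a gap relative to it): $g(x)=e^{-\gamma x}$ violates hypothesis ($g_0$) of \eqref{eq:condition_L}, since $g(0)=1\neq 0$, so invoking Theorem~\ref{theo_sol_unica} verbatim requires recentering the nonlinearity; neither the paper nor your proposal addresses this.
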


\end{document}